\documentclass[a4paper,11pt,oneside,reqno]{amsart}
\usepackage{silence}
\allowdisplaybreaks
\usepackage[utf8]{inputenc}
\usepackage[pdftex]{graphicx}
\usepackage{amssymb}
\usepackage{esint}
\usepackage{amsmath}
\usepackage{amsfonts}
\usepackage{amsthm}
\usepackage{bm}
\usepackage{mathrsfs}
\usepackage{mathtools}
\usepackage{comment}
\mathtoolsset{showonlyrefs=true}
\usepackage{appendix}
\usepackage{enumerate}
\usepackage[a4paper, total={6.6in, 9.7in}]{geometry}
\usepackage[stretch=30,shrink=30]{microtype}
\usepackage{stmaryrd}
\usepackage[normalem]{ulem} 
\usepackage{xcolor}
\definecolor{antiquefuchsia}{rgb}{0.57, 0.36, 0.51}
\definecolor{azure}{rgb}{0.0, 0.5, 1.0}

\usepackage[backref=page, colorlinks = true, linkcolor = black, citecolor = antiquefuchsia]{hyperref}

\renewcommand*{\backref}[1]{}
\renewcommand*{\backrefalt}[4]{%
    \ifcase #1 (Not cited.)%
    \or        (Cited on page~#2.)%
    \else      (Cited on pages~#2.)%
    \fi}

\makeatletter
\def\th@plain{%
	\thm@notefont{}
	\itshape 
}
\def\th@definition{%
	\thm@notefont{}
	\normalfont 
}
\makeatother

\numberwithin{equation}{section}

\newtheorem{theorem}{Theorem}[section]
\newtheorem{lemma}[theorem]{Lemma}
\newtheorem{proposition}[theorem]{Proposition}
\newtheorem{corollary}[theorem]{Corollary}

\theoremstyle{definition}

\newtheorem{remark}[theorem]{Remark}

\renewcommand{\c}{\mathbb{C}}

\newcommand{\cp}{\mathbb{CP}}

\title[Genus, Morse Index, and Area of Minimal Surfaces in Three-Manifolds]{Genus, Morse Index, and Area of Minimal Surfaces in Three-Manifolds}

\author[Riccardo Caniato]{Riccardo Caniato}
\address{Warwick Mathematics Institute, Zeeman Building, University of Warwick, Coventry, CV4 7AL, United Kingdom}
\email{riccardo.caniato@warwick.ac.uk}
\date{\today} 
\subjclass[2020]{Primary 53A10; Secondary 53C42, 58E12, 58J50}

\begin{document}
\begin{abstract}
    We prove that, in every closed Riemannian three-manifold $(M^3,\bar g)$,
    there exists a constant $C>0$ such that every closed smoothly embedded
    minimal surface $\Sigma\subset M$ satisfies
    \begin{align*}
        b_1(\Sigma;\mathbb{Z}_2)
        \le C\bigl(\operatorname{Ind}(\Sigma)
        +\operatorname{Area}(\Sigma)\bigr).
    \end{align*}
    The estimate holds without orientability or two-sidedness assumptions
    and establishes the additive genus--index--area estimate conjectured
    by Song. For closed connected orientable two-sided minimal immersions,
    we also obtain explicit genus bounds under a lower bound on the
    ambient sectional curvature. Under positive ambient Ricci curvature,
    we prove the universal inequality
    $\gamma(\Sigma)\le8\operatorname{Ind}(\Sigma)$.
\end{abstract}
\maketitle
\tableofcontents
\section{Introduction}
\label{Section: Introduction}

A classical theme in differential geometry is to understand to what extent geometric information constrains topology. Some of the foundational results of the subject are manifestations of this principle: the Cartan--Hadamard theorem derives strong global topological consequences from a sign condition on the sectional curvature, while the Gauss--Bonnet theorem recovers the Euler characteristic of a closed surface from its total Gaussian curvature. For minimal surfaces, it is natural to seek analogous relations in which topology is controlled by geometric and variational quantities. Among the most fundamental of these are the area and the Morse index, which measure, respectively, the size of the surface and its instability. This leads to the general problem of understanding to what extent the topology of a closed minimal surface can be controlled in terms of its area and Morse index. In this paper, we prove that the first Betti number of every closed
embedded minimal surface in a fixed closed Riemannian three-manifold is bounded by a constant times the sum of its Morse index and area. We also obtain a universal linear genus--index bound for closed connected orientable two-sided minimal immersions in positive Ricci curvature.

An immersion $\iota:\Sigma\to(M^3,\bar g)$ is \emph{minimal} if its mean
curvature vector vanishes, or equivalently if it is a critical point
of the area functional under compactly supported variations. Its \emph{Morse
index} $\operatorname{Ind}(\Sigma)$ is the maximal dimension of a space
of smooth normal variations on which the second variation of area is
negative definite. A minimal surface is \emph{stable} if its index is zero.
For a closed surface the index is finite; it counts strictly negative
eigenvalues of the normal Jacobi operator, with multiplicity, and does
not include nullity. An immersion is \emph{two-sided} if its normal line bundle
is trivial. In this case, a global unit normal $\nu$ identifies normal
variations with functions, and the second variation is
\begin{align*}
    \delta^2\operatorname{Area}_\Sigma(f\nu,f\nu)
    =\int_\Sigma\bigl(|\nabla f|^2
    -\bigl(|A_\Sigma|^2+\operatorname{Ric}_M(\nu,\nu)\bigr)f^2\bigr)
    \,\omega_\Sigma.
\end{align*}
Here $A_\Sigma$ is the second fundamental form and $\omega_\Sigma$
denotes the induced area measure.
For a nontrivial normal bundle, the index is defined directly on normal
sections. Throughout, a closed manifold is compact and has no boundary,
and all geometric quantities associated with an immersion are computed
on its domain with the induced metric $h:=\iota^*\bar g$. We write
$\gamma(\Sigma)$ for the genus of a connected orientable surface and
$b_1(\Sigma;\mathbb{Z}_2):=\dim H_1(\Sigma;\mathbb{Z}_2)$.

The interaction between stability, curvature, and topology is already
central to the work of Schoen--Yau and Fischer-Colbrie--Schoen
\cite{SY79,FCS80}. For instance, testing the stability inequality with
the constant function and using the Gauss equation shows that a closed
orientable two-sided stable minimal surface in a three-manifold of
positive scalar curvature must have genus zero. Under nonnegative
scalar curvature, the only additional possibility is a totally geodesic
flat torus. These conclusions illustrate the strength of the stability
inequality, but also the difficulty at positive index: the inequality
then holds only on a subspace of finite codimension, and the constant
function need not belong to that subspace.

Several approaches relate the index to topology in particular ambient
geometries. Ros used harmonic one-forms to construct normal test
functions and obtain index bounds for minimal surfaces in flat
three-tori and in Euclidean space \cite{Ros06}. For complete minimal
surfaces of finite index in $\mathbb{R}^3$, Chodosh--M\'aximo established lower bounds
for the index in terms of genus and the number of ends, and subsequently
refined these estimates to account for the multiplicities of the ends
\cite{CM16,CM23}. In the embedded Euclidean setting,
Meeks--P\'erez--Ros proved that the number of ends of a complete
minimal surface of finite topology is bounded in terms of its genus
alone; when the index is finite, they also bounded it from above by
a function of the genus \cite{MPR19}. In higher dimensions, Savo related the Jacobi spectrum of
a minimal hypersurface of the sphere to the Hodge Laplacian on
one-forms \cite{Savo10}. Building on these methods,
Ambrozio--Carlotto--Sharp proved linear index--Betti-number estimates
under an extrinsic curvature condition, including all compact
rank-one symmetric spaces \cite{ACS}. Further index--topology estimates for complete embedded minimal hypersurfaces in $\mathbb{R}^4$ were recently obtained by Chodosh--Gianocca \cite[Theorem~10.2]{CG26}. 
These results support the conjecture of Schoen and Marques--Neves
that, in a fixed closed manifold of positive Ricci curvature,
the first Betti number over $\mathbb{R}$ of a closed embedded
minimal hypersurface is bounded above by a constant times its
Morse index; see \cite[Section~8]{NevesICM14},
\cite[Section~4.1]{MarquesICM14}, and
\cite[Introduction]{ACS}.

There is also a complementary estimate in the opposite direction.
Ejiri--Micallef compared the second variations of area and energy and
proved that every closed connected orientable minimal immersion into
a fixed closed Riemannian manifold satisfies
\begin{align}\label{Equation: Ejiri Micallef index estimate}
    \operatorname{Ind}(\Sigma)
    \le C\bigl(\gamma(\Sigma)+\operatorname{Area}(\Sigma)\bigr),
\end{align}
where $C$ depends only on the ambient metric
\cite[Theorem 4.3]{EM}. In particular, this upper bound requires no
control of the second fundamental form. Together with a converse
estimate, it would identify genus plus area and index plus area as
comparable measures of the complexity of a minimal surface.

Compactness theory provides another connection between these
quantities. Choi--Schoen proved smooth compactness for closed embedded
minimal surfaces of fixed topological type in a closed three-manifold
of positive Ricci curvature \cite{CS85}. In the direction relevant
here, the regularity theory of Schoen--Simon for stable minimal
hypersurfaces \cite{SS81} underlies Sharp's compactness theorem for
embedded minimal hypersurfaces with bounded area and index in ambient
dimensions three through seven \cite{Sharp17}. After passing to a
subsequence, the convergence is smooth and graphical, possibly with
multiplicity, away from finitely many points where curvature may
concentrate. Chodosh--Ketover--M\'aximo described the degenerations
of bounded-index hypersurfaces \cite{CKM17}, while Buzano--Sharp
established a quantization of the total curvature in terms of
Euclidean minimal hypersurfaces arising as bubbles \cite{BS18}.
Among the consequences of these works is finiteness of the possible
diffeomorphism types under simultaneous area and index bounds. Related questions concerning the behaviour of the index under bubbling were studied by Da Lio--Gianocca--Rivi\`ere \cite{DGR25} for conformally invariant variational problems on
surfaces. They established upper semicontinuity of the Morse index plus nullity, accounting for the contributions of both the limiting map and the bubbles.

Quantitative estimates must additionally track the dependence on
both bounds. M\'aximo proved that, for closed embedded minimal
surfaces in a fixed closed three-manifold and a prescribed upper
bound $I_0$ for the index, the genus is bounded by the area times a
constant depending on the ambient metric and $I_0$ \cite{Maximo23}.
Song subsequently proved that, in ambient dimensions three through
seven, the total Betti number of a closed embedded minimal
hypersurface of area at most $A$ is bounded by
$C_A(1+\operatorname{Ind}(\Sigma))$ \cite{Song}. In ambient dimension
three, his constant depends linearly on $A$, yielding
\begin{align}\label{Equation: Song multiplicative topology estimate}
    b_1(\Sigma;\mathbb{Z}_2)
    \le C\operatorname{Area}(\Sigma)
    \bigl(1+\operatorname{Ind}(\Sigma)\bigr),
\end{align}
with $C$ determined solely by the ambient manifold. Thus the
dependence on each quantity separately is linear, whereas their
joint dependence contains a product. Song conjectured that this
product could be replaced by an additive dependence, or equivalently,
in view of \eqref{Equation: Ejiri Micallef index estimate}, that genus
plus area and index plus area should be comparable
\cite[Section 6.1, Conjecture $\mathbf{C}_1(1)$]{Song}. 
Cordero-Misteli and Franz \cite{CF24} extended Song's covering
method to free boundary minimal hypersurfaces, obtaining
corresponding bounds for the topology of the hypersurface and
its boundary, with a constant depending linearly on the area
bound in ambient dimension three.

The distinction is particularly relevant to the Almgren--Pitts
min--max theory \cite{Pitts81}, which constructs minimal
hypersurfaces from nontrivial families of cycles.
The multiparameter constructions of Marques--Neves \cite{MN17},
the density theorem of Irie--Marques--Neves \cite{IMN18}, and Song's
resolution of Yau's conjecture \cite{SongYau23} show that closed
minimal hypersurfaces occur abundantly. The corresponding existence
theory gives quantitative information on area and index, and one
would like to deduce equally precise information on topology.
The Weyl law of Liokumovich--Marques--Neves \cite{LMN18}, the
multiplicity-one theorem of Zhou \cite{Zhou20}, and the index
characterization of Marques--Neves \cite{MN21} provide the relevant
estimates for the volume spectrum. In dimension three, the
Allen--Cahn approach of Chodosh--Mantoulidis gives a related
realization of the Morse-theoretic picture \cite{CMant20}.
Area alone cannot bound topology: Chu--Stern constructed, for
generic metrics on closed three-manifolds, sequences of embedded
minimal surfaces with bounded area and unbounded genus
\cite{ChuStern25}. Chu also proved that every Riemannian three-sphere of positive
Ricci curvature contains embedded minimal surfaces of every
genus with a uniform upper bound for their areas \cite{Chu25}. Conversely, fixed ambient manifolds can contain
stable embedded minimal surfaces of unbounded genus and area;
see \cite[Remark 3]{Maximo23}. Both quantities are therefore
needed for a general estimate without curvature assumptions.

\subsection{Main results}

Our first theorem establishes the additive estimate in arbitrary
closed Riemannian three-manifolds, including nonorientable and
one-sided surfaces.

\begin{theorem}[Topology, index, and area]
\label{Theorem: Topology index and area}
    Let $(M^3,\bar g)$ be a closed Riemannian manifold. There
    exists $C=C(M,\bar g)>0$ such that every closed smoothly embedded
    minimal surface $\Sigma\subset M$, possibly disconnected,
    satisfies
    \begin{align}\label{Equation: Main topology estimate}
        b_1(\Sigma;\mathbb{Z}_2)
        \le C\bigl(\operatorname{Ind}(\Sigma)
        +\operatorname{Area}(\Sigma)\bigr).
    \end{align}
    Neither $M$ nor $\Sigma$ is assumed orientable, and the normal
    bundle of $\Sigma$ is allowed to be nontrivial.
\end{theorem}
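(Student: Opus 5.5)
We localize the topology of $\Sigma$ at a scale adapted to the curvature, so that the index and the area each control separate parts of $b_1$, and we combine the two contributions additively rather than multiplicatively. The basic tool is a quantitative form of Schoen's curvature estimate for stable minimal surfaces. There exist $r_0=r_0(M,\bar g)>0$ and $C_0$ such that, if $\Sigma\cap B_{2r}(x)$ is stable with $r\le r_0$, then $|A_\Sigma|\le C_0/r$ on $B_r(x)$. This holds for embedded surfaces, including one-sided ones, by passing to the local two-sided structure: a small ball is simply connected, so embedded surfaces in it are two-sided there. Combining this estimate with the monotonicity formula, each component of $\Sigma\cap B_r(x)$ is then a graph with bounded slope over a disk, so it is topologically trivial. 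The number of such sheets is at most $C\operatorname{Area}(\Sigma\cap B_{2r}(x))/r^2$.

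\textbf{Step 1: good and bad regions.} Cover $M$ by a fixed finite family of balls $\{B_{r_0}(x_j)\}_{j=1}^N$ with bounded overlap, where $N=N(M,\bar g)$. Call a ball \emph{good} if $\Sigma$ is stable in its double. Since disjointly supported destabilizing variations are orthogonal for the index form, at most $C\operatorname{Ind}(\Sigma)$ of the doubled balls can be unstable, using bounded overlap. In a good ball, $\Sigma$ splits into at most $C\operatorname{Area}(\Sigma)/r_0^2$ graphical disks. A Mayer--Vietoris or nerve argument over the cover then bounds the contribution of the good balls to $b_1(\Sigma;\mathbb{Z}_2)$ by a constant times the total number of sheets, hence by $C\operatorname{Area}(\Sigma)$. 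The constant depends on $r_0$ and on the overlap number, but not on the index.

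\textbf{Step 2: the bad balls, by a multiscale recursion.} Inside an unstable ball we cannot argue at scale $r_0$, so we recurse. Within each bad ball, repeat the construction at scales $r_0 2^{-k}$, subdividing only the balls that remain unstable. This is the main obstacle: a naive recursion produces the product $\operatorname{Area}\times\operatorname{Ind}$ from Song's estimate, because each of the $\operatorname{Ind}$ bad regions could carry area comparable to $\operatorname{Area}(\Sigma)$ and therefore many sheets. To avoid this, I would replace the count of sheets in a bad region by a count of its \emph{boundary} complexity. Choose radii $\rho\in[r,2r]$ by coarea so that $\partial B_\rho\cap\Sigma$ is a union of curves of total length at most $C\operatorname{Area}(\Sigma\cap B_{2r})/r$. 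At the next scale, for the annular region between scales, sheets that are graphical over annuli contribute no new homology; a sheet meets the boundary of a smaller ball in circles only if it is not stable there.

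The key estimate I would aim for is the following. Each connected component of $\Sigma\cap B_\rho$ that carries nontrivial relative $H_1$ supports a negative direction of the index form. Such a direction is produced by a logarithmic cutoff together with a stability-failure argument, using the local Gauss--Bonnet formula: a component with genus or many boundary circles has total curvature $\int|A|^2$ bounded below by a universal constant, by the Gauss equation and local Gauss--Bonnet. Disjoint such components give independent negative directions, so the number of topologically nontrivial components is at most $\operatorname{Ind}(\Sigma)$ per scale. Summing over the nested scales needs care: I would stop the recursion at the scale where curvature concentrates, rescale, and use that a stable blow-up is planar.

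\textbf{Step 3: assembling.} Write $\Sigma$ as the union of its good and bad pieces and apply Mayer--Vietoris over $\mathbb{Z}_2$. This gives
\begin{align}
b_1(\Sigma;\mathbb{Z}_2)\le \sum_{\text{pieces}} b_1(\text{piece})+\#\{\text{boundary circles}\}.
\end{align}
The good pieces are disks, and the boundary circles in good balls are counted by the area. The topology inside bad pieces is bounded by $C$ times the number of disjoint unstable nontrivial components, which is at most $C\operatorname{Ind}(\Sigma)$. Nonorientability and one-sidedness enter only through $\mathbb{Z}_2$ coefficients and through the local two-sidedness in small balls, so the argument applies to all embedded $\Sigma$ as stated in Theorem~\ref{Theorem: Topology index and area}.
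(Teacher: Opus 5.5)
Your proposal has a genuine gap, located exactly where you place the main obstacle.

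\textbf{Steps~1 and~3 give back the product.} These steps are essentially the covering argument of Song and M\'aximo, and your bookkeeping returns Song's product bound. Your Mayer--Vietoris inequality charges every interface circle. For a bad ball at scale $r$, the coarea choice of $\rho$ bounds the number of circles of $\Sigma\cap\partial B_\rho$ only by the area ratio $\operatorname{Area}(\Sigma\cap B_{2r})/r^2$. Monotonicity controls this by $C\operatorname{Area}(\Sigma)$ for each ball separately, but not summably over balls. If $\Sigma$ consists of $k$ nearly parallel sheets joined by necks, each of the up to $C\operatorname{Ind}(\Sigma)$ neck balls meets all $k$ sheets.

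\textbf{The key estimate of Step~2 is false as stated.} It is meant to repair this, but it is unproved and fails already for the catenoid $X(t,\theta)=(a\cosh t\cos\theta,a\cosh t\sin\theta,at)$ with $\rho$ slightly larger than $a$. The intersection with $B_\rho(0)$ is the band $|t|<t_1$, where $\cosh^2t_1+t_1^2=\rho^2/a^2$. This is an annulus, so it has nontrivial relative $H_1$, and its total curvature $\int|A|^2=8\pi\tanh t_1$ tends to zero as $\rho\downarrow a$. The intersection with $B_{2\rho}(0)$ is the band $|t|<t_2$ with $t_2\approx1.1$. There the dilation Jacobi field $1-t\tanh t$ is positive (it first vanishes at $t\approx1.2$), so this piece is stable.

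\textbf{Counting components does not count topology.} Even one negative direction per topologically nontrivial component bounds only the \emph{number} of such components, not their first Betti numbers; a single component can carry arbitrarily many handles. Passing to finer scales does not help. A ball containing a neck of size $a$ stays unstable at every dyadic scale between $a$ and $r_0$, so the number of bad scales is unbounded, and a bound of $C\operatorname{Ind}(\Sigma)$ per scale does not sum. Stopping at the concentration scale and blowing up is the qualitative bubbling analysis of Chodosh--Ketover--M\'aximo and Buzano--Sharp. It gives finiteness for bounded index, but no constants linear in the index. So the sentence in Step~3 asserting that the topology of the bad pieces is at most $C$ times the number of unstable nontrivial components is the conjecture itself, not a consequence of the preceding steps.

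\textbf{What the paper does instead.} It obtains linearity from a global spectral argument with no local topology counting. On a closed oriented surface of genus $\gamma$, Riemann--Roch provides, for every $N\ge2$, a linearly full holomorphic map $F$ into $\cp^{N-1}$ of degree $\gamma+N-1$. After projective balancing against $V_+\,\omega_\Sigma$, the $N^2$ Hilbert--Schmidt coordinates of $P\circ G\circ F$ are test functions of total energy $4\pi(\gamma+N-1)$. In a basis adapted to the projection onto the negative eigenspace, all but at most $I_V$ of them have nonnegative Jacobi form. The remaining ones contribute at worst $-\frac1N\int V_+$ each. This yields
\[
\int V\le4\pi(\gamma+N-1)+\frac{I_V}{N}\int V_+ .
\]
Gauss--Bonnet gives $\int V=8\pi(\gamma-1)+\int b$ for Jacobi potentials. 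The choice $N=4I_V$ then absorbs the genus term and yields $\gamma\le8I_V+1+C\operatorname{Area}$.

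Nonorientable and one-sided surfaces are handled on the orientation cover via a partition-of-unity comparison $\operatorname{Ind}(Q_{\widehat V-c_M})\le qm\operatorname{Ind}(\Sigma)$. The constant shift $c_M$ costs only another area term. To make a covering approach work, you would need a genuinely new local statement bounding the topology of each unstable region by its own index contribution plus a summable share of the area. Your proposal identifies this need but does not supply it.
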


For a connected orientable surface,
$b_1(\Sigma;\mathbb{Z}_2)=2\gamma(\Sigma)$; for a connected
nonorientable surface, this Betti number equals its nonorientable
genus. In particular,
Theorem~\ref{Theorem: Topology index and area} and
\eqref{Equation: Ejiri Micallef index estimate} imply that
\begin{align}\label{Equation: Additive genus index area comparison}
    C^{-1}\bigl(\gamma(\Sigma)+\operatorname{Area}(\Sigma)\bigr)
    &\le\operatorname{Ind}(\Sigma)+\operatorname{Area}(\Sigma)
    \notag\\
    &\le C\bigl(\gamma(\Sigma)+\operatorname{Area}(\Sigma)\bigr)
\end{align}
for every closed connected orientable embedded minimal surface in
$(M^3,\bar g)$, after enlarging $C$. Thus genus plus
area and index plus area are comparable with constants determined
only by the ambient metric.

The additive dependence gives a linear topological bound for sequences
with prescribed index and area growth. In particular, suppose that
$\bar g$ is a bumpy metric on a closed connected orientable
three-manifold $M$;
that is, no closed immersed minimal hypersurface admits a nonzero
normal Jacobi field. By the multiplicity-one theorem of Zhou and
the index characterization of Marques--Neves
\cite{Zhou20,MN21}, for every positive integer $p$ the $p$-width
$\omega_p(M,\bar g)$ is realized by a closed embedded two-sided
minimal surface $\Sigma_p$, possibly disconnected, such that
\begin{align*}
    \operatorname{Ind}(\Sigma_p)=p,
    \qquad
    \operatorname{Area}(\Sigma_p)=\omega_p(M,\bar g).
\end{align*}
The Weyl law \cite{LMN18} gives
$\operatorname{Area}(\Sigma_p)=O(p^{1/3})$. Thus, Theorem~\ref{Theorem: Topology index and area} yields
$b_1(\Sigma_p;\mathbb{Z}_2)\le Cp$.
Conversely, all components of $\Sigma_p$ are orientable, and summing
\eqref{Equation: Ejiri Micallef index estimate} over them gives
\begin{align*}
    p\le C\bigl(b_1(\Sigma_p;\mathbb{Z}_2)
    +\operatorname{Area}(\Sigma_p)\bigr).
\end{align*}
Since the area is $o(p)$, there exist $c,C>0$ such that
\begin{align}\label{Equation: Linear topology of volume spectrum surfaces}
    cp\le b_1(\Sigma_p;\mathbb{Z}_2)\le Cp
\end{align}
for all sufficiently large $p$. In particular, the sum of the genera of the components grows linearly with $p$. The linear lower bound was already observed in \cite{Song}; see also \cite[Corollary~1.10]{CMant20} for the Allen--Cahn setting. The matching upper bound established here gives the linear growth anticipated by Marques and Neves \cite[Section~4.1]{MarquesICM14}, \cite[Section~8]{NevesICM14}, and subsequently conjectured in \cite{Song}, improving the upper bound of order $p^{4/3}$ obtained from \eqref{Equation: Song multiplicative topology estimate}.

For orientable two-sided immersions, the estimate can be made
explicit in terms of a lower bound for the ambient sectional
curvature. This result does not require embeddedness or compactness
of the ambient manifold.

\begin{theorem}[An explicit estimate for two-sided immersions]
\label{Theorem: Explicit two sided estimate}
    Let $\iota:\Sigma\to(M^3,\bar g)$ be a closed connected
    orientable two-sided minimal immersion. Suppose that all ambient
    sectional curvatures along $\iota(\Sigma)$ are at least
    $-\kappa$, for some $\kappa\ge0$. Then
    \begin{align}\label{Equation: Main explicit estimate}
        \gamma(\Sigma)
        \le8\operatorname{Ind}(\Sigma)+1
        +\frac{7\kappa}{4\pi}\operatorname{Area}(\Sigma).
    \end{align}
    If $\operatorname{Ind}(\Sigma)=0$, the stronger estimate
    \begin{align}\label{Equation: Stable explicit estimate}
        \gamma(\Sigma)
        \le1+\frac{\kappa}{2\pi}\operatorname{Area}(\Sigma)
    \end{align}
    holds.
\end{theorem}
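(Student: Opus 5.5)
We follow Ros's harmonic-form method, combined with a Gauss--Bonnet-type control of the error terms. Let $\mathcal H$ be the space of harmonic one-forms on $(\Sigma,h)$; it has real dimension $2\gamma(\Sigma)$. For $\omega\in\mathcal H$, let $\omega^\sharp$ denote its dual vector field, tangent to $\iota(\Sigma)$. The classical trick is to view $\omega^\sharp$ and $\star\omega^\sharp$ as ambient vectors. We then test the second variation against the normal components $u_i=\langle \omega^\sharp\times e_i,\nu\rangle$ for a frame, or more invariantly against the functions $\langle X,\nu\rangle$. In a curved ambient space there is no global parallel frame, so we instead use the conformal structure directly. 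Since $\Sigma$ is a Riemann surface, $\mathcal H$ is identified with the holomorphic one-forms $\phi$. The quadratic map $\phi\mapsto \phi\otimes\phi$ pairs naturally with the Hopf differential, which is the $(2,0)$ part of $A_\Sigma$.

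\textbf{Key step (a pointwise algebraic identity).} Given $\omega\in\mathcal H$, we define the normal test function
\[
f_\omega:=\langle A_\Sigma,\ \omega\otimes\omega-\tfrac12|\omega|^2h\rangle\,/\,|\omega|
\]
or, more simply, a function whose gradient and potential terms combine through the Codazzi equation and $d\omega=d^*\omega=0$. We then compute $Q(f_\omega,f_\omega)$ using the Bochner formula on one-forms, $\Delta\omega=\nabla^*\nabla\omega+K\omega$, together with the Gauss equation $2K=2\,\mathrm{sec}_M(T\Sigma)-|A_\Sigma|^2$. This gives an inequality of the form
\[
Q(f_\omega,f_\omega)\le \int_\Sigma\bigl(-2K+c\,\kappa\bigr)|\omega|^2\cdots .
\]
Summing over an orthonormal-type family, for instance over $\omega$ and $\star\omega$ paired, makes the pointwise density $\sum|\omega_j|^2$ comparable to the Bergman kernel. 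Its integral against $-K$ is then handled by Gauss--Bonnet, $\int(-K)=2\pi(2\gamma-2)$, while the $\kappa$ terms contribute at most a constant times $\kappa\operatorname{Area}$.

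\textbf{Linear algebra conclusion.} The map $\omega\mapsto f_\omega$ is quadratic, not linear. We therefore linearize, in the spirit of Ros and of Ambrozio--Carlotto--Sharp: we consider the linear maps $\Phi:\mathcal H\to C^\infty(\Sigma)^{k}$ given by $\omega\mapsto(\langle\omega^\sharp\times\nu, E_a\rangle)$, built from a fixed local frame $E_a$ obtained by pulling back via the immersion. The trace over $k$ test functions produces the factor $8$ in \eqref{Equation: Main explicit estimate}. Once the quadratic form $\sum_a Q(\Phi_a\omega)$ is negative whenever $\omega\neq0$ lies outside an exceptional subspace, we obtain $2\gamma\le k\,\operatorname{Ind}+\dim(\ker)$. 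The kernel is controlled by the dimension of the set of $\omega$ with $\Phi\omega=0$; here the $+1$ and the $\operatorname{Area}$ term enter. In the stable case, $\operatorname{Ind}=0$, we bypass this step: we test with the constant function, or integrate the Bochner identity for each harmonic form. This gives $\int(-K)\le\kappa\operatorname{Area}$, and Gauss--Bonnet then yields $4\pi(\gamma-1)\le 2\kappa\operatorname{Area}$, which is \eqref{Equation: Stable explicit estimate}.

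\textbf{Main obstacle.} The hardest part is the absence of ambient parallel fields, which Ros uses crucially. We will try to replace them with a conformal-geometric identity intrinsic to $\Sigma$, pairing holomorphic one-forms with the Hopf differential. The ambient curvature then appears only through the Gauss and Codazzi equations and Simons-type terms, all bounded below by $-\kappa$. Obtaining the sharp constants $8$ and $7\kappa/4\pi$ will require careful bookkeeping of these terms.
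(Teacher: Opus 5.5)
\textbf{Stable case.} This part is essentially the paper's argument. Testing $Q$ with $f\equiv1$ gives $\int_\Sigma V_\Sigma\,\omega_\Sigma\le0$. Writing $V_\Sigma=-2K_\Sigma+b_\Sigma$ with $b_\Sigma=2\operatorname{sec}_M(T\Sigma)+\operatorname{Ric}_M(\nu,\nu)\ge-4\kappa$ then yields $4\pi(\gamma-1)=-\int_\Sigma K_\Sigma\,\omega_\Sigma\le2\kappa\operatorname{Area}(\Sigma)$. Your intermediate bound $\int(-K)\le\kappa\operatorname{Area}$ is not what this gives, but your final line is correct.

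\textbf{Main estimate.} This is not proved, and the obstacle you flag yourself is left unresolved. There are three concrete failures.
\begin{enumerate}
\item No test function is actually defined. $f_\omega$ is left open, and the linearized version $\Phi(\omega)=(\langle\omega^\sharp\times\nu,E_a\rangle)_a$ needs ambient fields $E_a$ that are global \emph{and parallel}. Ros's cancellation of the gradient terms uses exactly $\nabla^M E_a=0$; this is why his method, and its extension by Ambrozio--Carlotto--Sharp, is confined to flat ambients or extrinsic curvature conditions. With a local frame your functions are not globally defined on $\Sigma$. With a global frame of $\iota^*TM$ (which exists), the error terms involve $\nabla^M E_a$, and these are not controlled by a lower bound $-\kappa$ on sectional curvature.
\item The step ``its integral against $-K$ is handled by Gauss--Bonnet'' fails. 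Gauss--Bonnet controls $\int_\Sigma K_\Sigma$, not $\int_\Sigma K_\Sigma\sum_j|\omega_j|^2$ with a nonconstant Bergman-type weight. Moreover $-K_\Sigma=\tfrac12|A_\Sigma|^2-\operatorname{sec}_M(T\Sigma)$ has no sign under your hypotheses.
\item The constants $8$, $+1$ and $7\kappa/4\pi$ are asserted rather than derived. In particular, nothing in the sketch bounds the dimension of an exceptional subspace by the area.
\end{enumerate}

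\textbf{The missing idea.} The paper takes test functions that are controlled \emph{conformally}, not built from harmonic forms and ambient frames.
\begin{itemize}
\item Riemann--Roch gives, for every $N\ge2$, a linearly full holomorphic $F:\Sigma\to\mathbb{CP}^{N-1}$ of degree $\gamma+N-1$.
\item One balances $F$ by some $G\in\operatorname{GL}(N,\mathbb{C})$ against the weight $W=V_++\varepsilon$, and uses the $N^2$ coordinates $f_\alpha=\operatorname{tr}(A_\alpha\Psi)$ of $\Psi=P\circ G\circ F$.
\item These satisfy $\sum_\alpha f_\alpha^2\equiv1$ and $\sum_\alpha\int|\nabla f_\alpha|^2=4\pi(\gamma+N-1)$, independently of $G$. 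Balancing gives $\int Wf_A^2\le\frac1N\int W$ for every Hilbert--Schmidt unit $A$.
\item Choosing a basis adapted to the kernel of $A\mapsto\Pi_-f_A$, which has rank at most $I$, all but $I$ coordinates have $Q_V(f_\alpha)\ge0$. This yields $\int_\Sigma V\le4\pi(\gamma+N-1)+\frac{I}{N}\int_\Sigma V_+$.
\end{itemize}
Because $\sum_\alpha f_\alpha^2\equiv1$, the left side is the \emph{unweighted} integral, which is precisely what fixes your second problem: Gauss--Bonnet applies and gives $\int_\Sigma V_\Sigma=8\pi(\gamma-1)+\int_\Sigma b_\Sigma$. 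The genus then carries coefficient $8\pi$ on the left against $4\pi$ on the right. Choosing $N=4I$ and using $b_\Sigma\ge-4\kappa$ and $(V_\Sigma)_-\le2\kappa$ gives $8\pi(\gamma-1)\le64\pi I+14\kappa\operatorname{Area}(\Sigma)$, which is exactly the stated bound. The area term comes from these pointwise curvature bounds, not from a kernel dimension.
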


The same argument yields a bound independent of area when the ambient
Ricci curvature is nonnegative. More precisely, for every closed
connected orientable two-sided minimal immersion, we prove
\begin{align}\label{Equation: Main Ricci genus estimates}
    \gamma(\Sigma)&\le8\operatorname{Ind}(\Sigma)+1\ \qquad \text{ if }\operatorname{Ric}_M\ge0,\notag\\
    \gamma(\Sigma)&\le8\operatorname{Ind}(\Sigma)\qquad\qquad \text{ if }\operatorname{Ric}_M>0,
\end{align}
where the curvature assumptions are imposed along the immersion;
see Corollary~\ref{Corollary: Ricci curvature and genus}.
In particular, positive Ricci curvature gives
\begin{align*}
    b_1(\Sigma;\mathbb{R})
    =2\gamma(\Sigma)\le16\operatorname{Ind}(\Sigma).
\end{align*}
This proves the orientable two-sided surface case of the
Marques--Neves--Schoen conjecture described above. The constant is
independent of the ambient metric, and the immersion need not be
embedded. For comparison, the sharper genus bound
$\gamma(\Sigma)\le3$ for index-one surfaces under nonnegative Ricci
curvature was already known \cite{Ros06}; Ros subsequently constructed
a genus-three index-one embedded minimal surface in a metric of
positive sectional curvature on $\mathbb{RP}^3$ \cite{Ros25}.

For orientable two-sided embedded surfaces, one also obtains the converse comparison in
a fixed closed orientable three-manifold of positive Ricci curvature.
Indeed, choose $\rho>0$ such that
$\operatorname{Ric}_M\ge\rho\bar g$. The first-eigenvalue estimate
of Choi--Wang \cite{CW83} and the Yang--Yau inequality \cite{YangYau}
give
\begin{align*}
    \frac{\rho}{2}\operatorname{Area}(\Sigma)
    \le\lambda_1(\Sigma)\operatorname{Area}(\Sigma)
    \le8\pi\bigl(\gamma(\Sigma)+1\bigr).
\end{align*}
Together with \eqref{Equation: Ejiri Micallef index estimate}, this
implies
$\operatorname{Ind}(\Sigma)\le C(\gamma(\Sigma)+1)$.
On the other hand, positive Ricci curvature implies
$\operatorname{Ind}(\Sigma)\ge1$, so
\eqref{Equation: Main Ricci genus estimates} gives
$\gamma(\Sigma)+1\le9\operatorname{Ind}(\Sigma)$.
Consequently,
\begin{align}\label{Equation: Positive Ricci genus index comparison}
    C^{-1}\bigl(\gamma(\Sigma)+1\bigr)
    \le\operatorname{Ind}(\Sigma)
    \le C\bigl(\gamma(\Sigma)+1\bigr)
\end{align}
for closed connected orientable two-sided embedded minimal surfaces
in this setting. This establishes
\cite[Section 6.1, Conjecture $\mathbf{C}_1(3)$]{Song}
in that class.

\subsection{A scalar spectral inequality and the proof strategy}

The analytic argument belongs to the tradition of using conformal
and holomorphic maps as spectral test functions. Hersch's inequality
for the two-sphere \cite{Hersch} and the Yang--Yau estimate for
higher-genus surfaces \cite{YangYau} relate the first eigenvalue to
the energy of a suitably balanced map. Li--Yau developed the
conformal-volume approach \cite{LY}, while
Bourguignon--Li--Yau used full holomorphic immersions into projective space to obtain
first-eigenvalue estimates on K\"ahler manifolds \cite{BLY}.
Arezzo--Ghigi--Loi extended this construction using globally
generated vector bundles with stable Gieseker point \cite{AGL}.
Biliotti--Ghigi established a balancing theorem for measures that
give zero mass to projective hyperplane sections \cite{BG13}.
These results provide the projective averaging and energy identities
used below.

Related refinements of the projective construction for surfaces
were developed by Ros \cite{RosGenusThree,RosLargeGenus} and
Karpukhin--Vinokurov \cite{KV22}. Using holomorphic curves in
projective spaces and perturbations of the associated Hermitian
projection maps, they obtained improved bounds for the first
Laplace eigenvalue. In particular, the freedom to choose the
projective dimension also plays a role in \cite{KV22}.

Higher-eigenvalue estimates also form part of this picture.
Korevaar obtained bounds for Laplace eigenvalues on surfaces in
terms of genus and eigenvalue number \cite{Korevaar}.
Hassannezhad established estimates with additive dependence on
genus and eigenvalue number, and treated Schr\"odinger operators
with signed potentials \cite{Has11,Hass13}.
Kokarev obtained higher-eigenvalue bounds controlled by the degree
of a holomorphic map into projective space \cite{Kok20}.
Grigor'yan--Nadirashvili--Sire proved lower bounds for the number
of negative eigenvalues of Schr\"odinger operators with signed
potentials in terms of the integral of the potential, with
constants depending only on the genus in dimension two
\cite{GNS16}.

Our main analytic contribution is a scalar spectral inequality
that quantifies the effect of the negative spectral subspace on
balanced projective test functions. The estimate retains the
explicit genus coefficient $4\pi$, while dividing the index
contribution by an auxiliary dimension parameter. This quantitative
form allows the genus term to be absorbed in the Gauss--Bonnet
argument for curvature potentials.

The analytic statement underlying these geometric estimates concerns
an arbitrary scalar potential. Let $(\Sigma,h)$ be a closed connected
oriented Riemannian surface, and denote its area form by
$\omega_\Sigma$. For $V\in C^\infty(\Sigma,\mathbb{R})$, set
\begin{align}\label{Equation: Scalar quadratic form}
    Q_V(f):=\int_\Sigma\bigl(|\nabla f|_h^2-Vf^2\bigr)\,
    \omega_\Sigma,
    \qquad
    I_V:=\operatorname{Ind}(Q_V),
\end{align}
and write
\begin{align}\label{Equation: Potential positive negative parts}
    V_+:=\max\{V,0\},
    \qquad V_-:=\max\{-V,0\}.
\end{align}
Here $I_V$ is computed on real-valued functions, or equivalently is
the number of strictly negative eigenvalues, counted with
multiplicity, of $-\Delta_h-V$, with
$\Delta_h=\operatorname{div}_h\nabla$.

\begin{theorem}[A scalar spectral inequality]
\label{Theorem: Scalar spectral inequality}
    Let $(\Sigma,h)$ be a closed connected oriented Riemannian
    surface of genus $\gamma$, and let
    $V\in C^\infty(\Sigma,\mathbb{R})$. For every integer $N\ge2$,
    \begin{align}\label{Equation: Main scalar spectral inequality}
        \int_\Sigma V\,\omega_\Sigma
        \le4\pi(\gamma+N-1)
        +\frac{I_V}{N}\int_\Sigma V_+\,\omega_\Sigma.
    \end{align}
\end{theorem}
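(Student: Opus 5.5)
Let $\mathcal N\subset L^2(\Sigma)$ denote the span of the eigenfunctions of $-\Delta_h-V$ with strictly negative eigenvalues, so $\dim\mathcal N=I_V$ and $Q_V\ge0$ on $\mathcal N^{\perp}$. The plan is to build, in the spirit of Bourguignon--Li--Yau \cite{BLY} and Yang--Yau \cite{YangYau}, a large family of balanced test functions from a holomorphic map into projective space. We then average the stability inequality over this family, after paying a controlled price for the index constraints.

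\textbf{Construction of the projective map.} By Riemann--Roch, choose a holomorphic line bundle of degree $d:=\gamma+N-1$ on $\Sigma$; one can take it generic, or any divisor of degree $\ge\gamma+N-1$ with $h^0\ge N$. This gives a nondegenerate holomorphic map $\phi:\Sigma\to\cp^{N-1}$ of degree at most $d$; if $h^0$ is larger, project down linearly to $\cp^{N-1}$. Compose with the standard Hermitian embedding $\cp^{N-1}\hookrightarrow \mathrm{Herm}(N)$, $[z]\mapsto zz^*/|z|^2-\tfrac1N\mathrm{Id}$, and denote the resulting real components by $F=(F_a)$. Since $\phi$ is holomorphic, the Fubini--Study energy identity gives $\int_\Sigma|\nabla F|^2\omega_\Sigma=c_N\cdot 4\pi\deg\phi$ for an explicit normalization. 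We also have the pointwise identity $\sum_a F_a^2=1-\tfrac1N$. The relevant normalized ratio is therefore $\int|\nabla F|^2/\sum F_a^2\le 4\pi d$ after weighting, which is exactly the first term $4\pi(\gamma+N-1)$.

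\textbf{Balancing and averaging.} Rather than balancing against a measure, which is what the first eigenvalue estimate would do, we compose $\phi$ with $g\in SL(N,\c)$. By the Biliotti--Ghigi theorem \cite{BG13}, we choose $g$ so that the $L^2$ projections of the components $F_a\circ g$ onto $\mathcal N$ are small in an averaged sense. More concretely, I would use the $PU(N)$-equivariance: for $u\in U(N)$, $F\circ u=\mathrm{Ad}_u F$. Averaging over $U(N)$ with Haar measure then yields the following. The matrix-valued map $F$ spans a space of dimension $N^2-1$. Let $P$ denote the $L^2$-projection onto $\mathcal N$. The trace identity then gives
\[
\sum_a\int_\Sigma V_+\,(P F_a)^2\ \lesssim\ \frac{I_V}{N}\cdot\Bigl(1-\tfrac1N\Bigr)^{-1}\!\!\int_\Sigma V_+\sum_a F_a^2 .
\]
This is the mechanism that divides $I_V$ by $N$. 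It holds because the $I_V$ negative directions can "see" only an $O(I_V/N)$ fraction of the roughly $N^2$ coordinate functions, after suitable weighting by the rank-one structure of $zz^*$. Writing $F_a=(F_a-PF_a)+PF_a$ and applying $Q_V\ge0$ to $F_a-PF_a$ gives
\[
\int V\sum_a F_a^2\le\int|\nabla F|^2+\text{(cross terms)}+\int V_+\sum_a (PF_a)^2 .
\]
The cross terms are handled by choosing the splitting variationally: minimize $Q_V$ over $F_a+\mathcal N$, noting that $Q_V$ is negative definite on $\mathcal N$. Normalizing by $\sum F_a^2\equiv 1-1/N$ then yields \eqref{Equation: Main scalar spectral inequality}.

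\textbf{Main obstacle.} The delicate step is the index term. One needs the sharp factor $I_V/N$ multiplying $\int V_+$ with no additional geometric constant, uniformly in the unknown negative eigenspace. I would first try a weighted trace argument. Set $T:=\sum_a F_a\otimes F_a$ as an integral operator against the measure $V_+\omega_\Sigma$. Its restriction to the $I_V$-dimensional space has trace at most $I_V\,\|T\|_{op}$. Using the Haar average over $U(N)$, $\|T\|_{op}$ should be bounded by $\tfrac1N\int V_+$, because $\int_{U(N)}|\langle uz,w\rangle|^2\,du=1/N$ for unit vectors. The hard part is to control this uniformly in the moduli of $\phi$ while keeping the energy identity exact.
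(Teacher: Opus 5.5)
There is a genuine gap at the step that is supposed to produce $\frac{I_V}{N}\int_\Sigma V_+$. The Biliotti--Ghigi theorem balances a measure; it says nothing about the projections of the coordinates onto $\mathcal N$. Haar averaging over $U(N)$ cannot supply the factor $1/N$ either. Replacing $\Psi$ by $u\Psi u^*$ only replaces the coordinates $\operatorname{tr}(A_a\Psi)$ by those of the rotated orthonormal basis $u^*A_au$. Hence $\sum_a\int_\Sigma V_+(PF_a)^2$, the kernel $\operatorname{tr}(\Psi(x)\Psi(y))$ of your operator $T$, and $\|T\|_{op}$ are all $U(N)$-invariant. The identity $\int_{U(N)}|\langle uz,w\rangle|^2\,du=1/N$ controls one fixed direction on average, whereas the bad directions rotate with $u$.

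What you need is to balance the weight by a non-unitary transformation. Set $W:=V_++\varepsilon$ and $m:=\int_\Sigma W\,\omega_\Sigma$. The measure $F_\#(W\omega_\Sigma)/m$ charges no hyperplane because $F$ is linearly full. Projective balancing then gives $G\in\operatorname{GL}(N,\c)$ such that the map $\Psi$ obtained by composing $F$, $[z]\mapsto[Gz]$ and $[z]\mapsto zz^*/z^*z$ satisfies $\int_\Sigma W\Psi\,\omega_\Sigma=\frac mN\operatorname{Id}_N$. Combined with the pointwise bound $\operatorname{tr}(A\Psi)^2\le\operatorname{tr}(A^2\Psi)$, this yields $\int_\Sigma W\operatorname{tr}(A\Psi)^2\,\omega_\Sigma\le\frac mN\operatorname{tr}(A^2)$ for every $A\in\operatorname{Herm}(N)$. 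That is what actually makes $\|T\|_{op}\le m/N$ true. The energy is still $4\pi d$, since $G\circ F$ is holomorphic of the same degree.

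Even with this bound, your per-function splitting $F_a=(F_a-PF_a)+PF_a$ does not close. It requires $\sum_a\int_\Sigma V_+(PF_a)^2\le\frac{I_V}{N}\int_\Sigma V_+$. But $P$ is the unweighted $L^2$ projection and is not a contraction in $L^2(V_+\omega_\Sigma)$. Your trace bound would need a basis of $\mathcal N$ orthonormal for $V_+\omega_\Sigma$, while $Q_V\ge0$ holds only on the $L^2(\omega_\Sigma)$-complement. Also, $Q_V$ is unbounded below on $F_a+\mathcal N$, so the splitting cannot be chosen by minimization; for the spectral projection the cross terms vanish anyway.

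The missing idea is to do the linear algebra on $\operatorname{Herm}(N)$ rather than on functions. The map $A\mapsto P\operatorname{tr}(A\Psi)$ has rank $r\le I_V$. In a Hilbert--Schmidt orthonormal basis adapted to its kernel, all but $r$ coordinates $f_\alpha$ satisfy $Q_V(f_\alpha)\ge0$. For the remaining $r$, drop the gradient and use $\int_\Sigma Vf_\alpha^2\le\int_\Sigma Wf_\alpha^2\le m/N$. Summing with $\sum_\alpha f_\alpha^2=1$ and $\sum_\alpha|\nabla f_\alpha|^2=|d\Psi|^2$ gives $\int_\Sigma V\le4\pi d+\frac{I_V}{N}m$, and letting $\varepsilon\downarrow0$ finishes.

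Equivalently, the form $A\mapsto-Q_V(\operatorname{tr}(A\Psi))$ has at most $I_V$ positive eigenvalues, each at most $m/N$, and its Hilbert--Schmidt trace is $\int_\Sigma V-\int_\Sigma|d\Psi|^2$. This is the correct version of your trace argument.

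This argument requires keeping the identity direction, i.e., the constant function $1/\sqrt N$. With your trace-free embedding, $\sum_aF_a^2=1-\frac1N$ while the energy is still $4\pi d$. The argument then only gives $(1-\frac1N)\int_\Sigma V\le4\pi d+\frac{I_V}{N}\int_\Sigma V_+$, which is weaker than the statement, so normalizing does not recover it. Your construction of $F$ is fine: take any line bundle of degree $\gamma+N-1$, choose $N$ independent sections, and remove base points, which only lowers the degree.
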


We briefly describe the proof. The metric and orientation give
$\Sigma$ a complex structure. For every $N\ge2$, Riemann--Roch and
dimension estimates in the Picard variety provide a holomorphic map
\begin{align*}
    F:\Sigma\rightarrow\cp^{N-1},
    \qquad
    \deg F^*\mathcal{O}_{\cp^{N-1}}(1)=\gamma+N-1,
\end{align*}
whose image is contained in no projective hyperplane. Composing $F$
with the rank-one projection map
\begin{align*}
    P([z]):=\frac{zz^*}{z^*z}
\end{align*}
gives a map into the real vector space
$\operatorname{Herm}(N)$ of Hermitian matrices. The integral
$\int_\Sigma|d(P\circ F)|^2\,\omega_\Sigma$ equals
$4\pi(\gamma+N-1)$ and is unchanged by projective linear
transformations of $F$.

Given a nonnegative integrable weight $W\ge V_+$ with
$m:=\int_\Sigma W\,\omega_\Sigma>0$, projective balancing gives
$G\in\operatorname{GL}(N,\c)$ such that
$\Psi:=P\circ G\circ F$ satisfies
\begin{align*}
    \int_\Sigma W\Psi\,\omega_\Sigma
    =\frac{m}{N}\operatorname{Id}_N.
\end{align*}
The balancing identity controls every matrix direction:
\begin{align*}
    \int_\Sigma W\bigl(\operatorname{tr}(A\Psi)\bigr)^2
    \,\omega_\Sigma
    \le\frac{m}{N}\operatorname{tr}(A^2),
    \qquad A\in\operatorname{Herm}(N).
\end{align*}
Projecting the functions $\operatorname{tr}(A\Psi)$ onto the negative
spectral subspace of $-\Delta_h-V$ defines a linear map of rank at
most $I_V$. We choose a Hilbert--Schmidt orthonormal basis adapted
to its kernel. All but at most $I_V$ of the corresponding coordinate
functions have nonnegative quadratic form, while each remaining
coordinate contributes at worst $-m/N$. Summing over the full
matrix basis yields
\begin{align*}
    \int_\Sigma V\,\omega_\Sigma
    \le4\pi(\gamma+N-1)+\frac{I_V}{N}m.
\end{align*}
Taking $W=V_++\varepsilon$ and letting $\varepsilon\downarrow0$
proves Theorem~\ref{Theorem: Scalar spectral inequality}.

To pass to minimal surfaces, the Gauss equation expresses the scalar
Jacobi potential as
\begin{align*}
    V_\Sigma
    &=|A_\Sigma|^2+\operatorname{Ric}_M(\nu,\nu)
      =-2K_\Sigma+b_\Sigma,\\
    b_\Sigma
    &=\operatorname{Ric}_M(e_1,e_1)
      +\operatorname{Ric}_M(e_2,e_2),
\end{align*}
where $e_1,e_2$ is a local orthonormal tangent frame and $\nu$ is a
local unit normal. In the orientable two-sided case,
$I_{V_\Sigma}=\operatorname{Ind}(\Sigma)$. If this index is positive,
the choice $N=4I_{V_\Sigma}$ in
Theorem~\ref{Theorem: Scalar spectral inequality}, together with
Gauss--Bonnet, gives
\begin{align*}
    8\pi\bigl(\gamma(\Sigma)-1\bigr)
    \le64\pi\operatorname{Ind}(\Sigma)
    +\int_\Sigma(V_\Sigma)_-\,\omega_\Sigma
    -3\int_\Sigma b_\Sigma\,\omega_\Sigma.
\end{align*}
The curvature assumptions control the last two terms. The stable
case follows by testing the Jacobi form with the constant function.
This proves Theorem~\ref{Theorem: Explicit two sided estimate} and
the Ricci curvature estimates in
\eqref{Equation: Main Ricci genus estimates}.

For the general embedded case, we use local unit normals on a fixed
finite cover of the ambient manifold by coordinate balls. A
partition of unity compares the normal Morse index with the index
of a scalar operator on an orientable cover of the surface, after
subtracting a fixed constant from the scalar potential. The
localization error is bounded solely in terms of the ambient
metric. The shifted scalar estimate therefore introduces only an
additional area term. This yields
Theorem~\ref{Theorem: Topology index and area}, including its
nonorientable and one-sided cases.

\subsection{Related questions}

The preceding estimates leave several natural questions.
First, the higher-dimensional analogue would control all Betti
numbers by index and volume. More precisely, if $(M^{n+1},\bar g)$
is closed and $3\le n\le6$, does there exist $C=C(M,\bar g)>0$
such that every closed smoothly embedded minimal hypersurface
$\Sigma\subset M$ satisfies
\begin{align*}
    \sum_{j=0}^n b_j(\Sigma;\mathbb{Z}_2)
    \le C\bigl(\operatorname{Ind}(\Sigma)
    +\operatorname{Vol}_n(\Sigma)\bigr)?
\end{align*}
In dimension two, Theorem~\ref{Theorem: Topology index and area}
also controls the total Betti number: each connected component has
a uniform positive lower area bound, and its zeroth and second
mod-$2$ Betti numbers both equal one. In higher dimensions,
the volume dependence in Song's estimate \cite{Song} does not
give the displayed additive inequality. Our proof uses both
the complex structure of a surface and the Gauss--Bonnet formula,
so it does not directly extend to this setting.

The distinction between an ambient-dependent constant and a
dimension-dependent constant becomes essential in higher
dimensions. M\'aximo--Reiser--Semola constructed, in every ambient
dimension at least four, sequences of closed positively
Ricci-curved manifolds containing two-sided index-one minimal
hypersurfaces whose first Betti numbers tend to infinity
\cite{MRS26}. Thus the universal constant in
\eqref{Equation: Main Ricci genus estimates} has no direct
higher-dimensional counterpart. Their examples vary the ambient
manifold and do not settle the fixed-ambient
Marques--Neves--Schoen conjecture.

Second, in a fixed closed three-manifold of positive scalar
curvature, Chodosh--Ketover--M\'aximo proved area and genus bounds for closed connected embedded minimal surfaces, depending on a prescribed index bound \cite[Theorem 1.3]{CKM17}. The quantitative question is whether
\begin{align*}
    \operatorname{Area}(\Sigma)
    \le C\bigl(1+\operatorname{Ind}(\Sigma)\bigr)
\end{align*}
for every closed connected orientable two-sided embedded minimal
surface. In view of Theorem~\ref{Theorem: Topology index and area},
this is the remaining estimate needed to obtain the comparison
between genus plus area and index plus one proposed in
\cite[Section 6.1, Conjecture $\mathbf{C}_1(2)$]{Song}, in this
class of surfaces. The reverse bound follows from
\eqref{Equation: Ejiri Micallef index estimate} and the uniform
positive lower bound for area. The connectedness assumption is
necessary here: in the product $\mathbb{S}^2\times\mathbb{S}^1$,
arbitrarily many disjoint slices are stable and have unbounded
total area.

Finally, the optimal universal coefficient in the positive-Ricci
genus estimate remains to be determined. Let $C_*$ be the infimum
of the constants $C$ such that
$\gamma(\Sigma)\le C\operatorname{Ind}(\Sigma)$ for every closed
connected orientable two-sided minimal immersion into every
closed Riemannian three-manifold of positive Ricci curvature.
Our estimate and Ros's genus-three index-one example \cite{Ros25}
give
\begin{align*}
    3\le C_*\le8.
\end{align*}
It is natural to ask whether the coefficient $8$ can be improved
and to determine the value of $C_*$.

The paper is organized as follows. In
Section~\ref{Section: Projective test functions}, we construct the
holomorphic test maps, prove the balancing and index estimates, and
establish Theorem~\ref{Theorem: Scalar spectral inequality}.
Section~\ref{Section: Curvature potentials} treats curvature
potentials and proves the estimates for orientable two-sided minimal
immersions. Finally, Section~\ref{Section: General embedded case}
contains the localization argument and the proof of
Theorem~\ref{Theorem: Topology index and area}. 
\renewcommand\labelitemi{$\vcenter{\hbox{\tiny$\bullet$}}$}
\begingroup
\renewcommand{\addcontentsline}[3]{}
\subsection*{On the use of AI}
The author used ChatGPT-6 Astra solely to improve the clarity and style of the prose and to assist in identifying potential errors or gaps in the mathematical arguments. The research ideas and all the proofs were developed without the use of AI tools. All mathematical arguments were independently checked by the author, who takes full responsibility for the content of this paper.
\par
\endgroup
\renewcommand\labelitemi{$\vcenter{\hbox{\tiny$\bullet$}}$}
\begingroup
\renewcommand{\addcontentsline}[3]{}
\subsection*{Acknowledgements}
I am grateful to Felix Schulze for his support and advice during the final stages of preparing this paper. I also thank Alessandro Pigati for his careful reading of the manuscript and his valuable suggestions for improvement.
\par
\endgroup
\section{Projective test functions and the scalar spectral estimate}
\label{Section: Projective test functions}

Throughout this section, $(\Sigma,h)$ is a closed connected oriented
Riemannian surface of genus $\gamma$, endowed with the complex structure
determined by its metric and orientation. We denote its area form by
$\omega_\Sigma$. For $V\in C^\infty(\Sigma,\mathbb{R})$, we set
\begin{align*}
    Q_V(f):=\int_\Sigma\bigl(|\nabla f|_h^2-Vf^2\bigr)\,
    \omega_\Sigma,
    \qquad
    I_V:=\operatorname{Ind}(Q_V),
\end{align*}
where the index is computed on real-valued functions. Equivalently,
$I_V$ is the number of strictly negative eigenvalues, counted with
multiplicity, of $\mathcal{L}_V:=-\Delta_h-V$, with
$\Delta_h=\operatorname{div}_h\nabla$. We also write
$V_+:=\max\{V,0\}$ and $V_-:=\max\{-V,0\}$.

We first construct holomorphic maps into projective spaces of arbitrary
dimension, with degree controlled by the genus. We then balance the
associated projection coordinates and use the negative spectral subspace
of $\mathcal{L}_V$ to estimate their contribution to $Q_V$.

Recall that a point $p\in\Sigma$ is a \emph{base point} of a
holomorphic line bundle $L\to\Sigma$ if $s(p)=0$ for every
$s\in H^0(\Sigma,L)$. The bundle $L$ is called
\emph{globally generated} if, for every $p\in\Sigma$, the evaluation
map
\begin{align*}
    \operatorname{ev}_p:H^0(\Sigma,L)&\rightarrow L_p\\
    s&\mapsto s(p)
\end{align*}
is surjective. Since each fiber $L_p$ is one-dimensional, this is
equivalent to the absence of base points: at every point, at least
one global holomorphic section must be nonzero. Finally, for $p\in\Sigma$, we write
$L(-p):=L\otimes\mathcal{O}_\Sigma(-p)$; its holomorphic sections
identify naturally with the holomorphic sections of $L$ vanishing
at $p$, and $\deg L(-p)=\deg L-1$.

\begin{lemma}[Holomorphic maps of controlled degree]
\label{Lemma: Holomorphic maps of controlled degree}
    For every integer $N\ge2$, there exists a globally generated
    holomorphic line bundle $L\to\Sigma$ such that
    \begin{align}\label{Equation: Controlled degree line bundle}
        \deg L=\gamma+N-1,
        \qquad H^1(\Sigma,L)=0,
        \qquad h^0(\Sigma,L)=N.
    \end{align}
    In particular, there exists a holomorphic map
    $F:\Sigma\to\cp^{N-1}$ whose image is contained in no projective
    hyperplane and for which
    $F^*\mathcal{O}_{\cp^{N-1}}(1)\cong L$.
\end{lemma}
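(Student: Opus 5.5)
We will show that a generic line bundle of degree $d:=\gamma+N-1$ has the required properties. The argument is a dimension count in the Picard variety, using Riemann--Roch and the Jacobi inversion theorem. Throughout, $\operatorname{Pic}^k(\Sigma)$ denotes the component of degree-$k$ line bundles, which is a complex torus of dimension $\gamma$ (a point if $\gamma=0$).

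\emph{Step 1: vanishing of $H^1$.} By Serre duality, $H^1(\Sigma,L)\cong H^0(\Sigma,K_\Sigma\otimes L^{-1})^*$, and $\deg(K_\Sigma\otimes L^{-1})=2\gamma-2-d=\gamma-N-1$.
\begin{itemize}
\item If $\gamma-N-1<0$, this group vanishes for every $L$.
\item Otherwise, the line bundles of degree $k:=\gamma-N-1\le\gamma-1$ that admit a nonzero section are exactly those of the form $\mathcal{O}_\Sigma(D)$ with $D$ effective of degree $k$. These form the image of the Abel--Jacobi map $\Sigma^{(k)}\to\operatorname{Pic}^k(\Sigma)$, which is a closed analytic subset of dimension at most $k<\gamma$.
\end{itemize}
So for $L$ outside a proper closed analytic subset $Z_1\subset\operatorname{Pic}^d(\Sigma)$, we have $H^1(\Sigma,L)=0$. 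Riemann--Roch then gives $h^0(\Sigma,L)=d-\gamma+1=N$.

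\emph{Step 2: global generation.} This is the main point. By Step 1, $L$ is globally generated as soon as $h^0(L(-p))=N-1$ for all $p$. Since $\deg L(-p)=d-1$, Riemann--Roch shows this holds iff $H^1(\Sigma,L(-p))=0$. By Serre duality, this in turn is equivalent to
\[
H^0\bigl(K_\Sigma\otimes L^{-1}(p)\bigr)=0, \qquad \deg\bigl(K_\Sigma\otimes L^{-1}(p)\bigr)=\gamma-N.
\]
\begin{itemize}
\item If $\gamma-N<0$, this is automatic.
\item Otherwise, $L$ fails to be globally generated only if $K_\Sigma\otimes L^{-1}\cong\mathcal{O}_\Sigma(D-p)$ for some effective $D$ of degree $\gamma-N$ and some $p\in\Sigma$. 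The set of such classes is the image of $\Sigma^{(\gamma-N)}\times\Sigma\to\operatorname{Pic}^{\gamma-N-1}$, which has dimension at most $\gamma-N+1\le\gamma-1<\gamma$ because $N\ge2$.
\end{itemize}
Translating by $K_\Sigma$ and inverting, we obtain a proper closed analytic subset $Z_2\subset\operatorname{Pic}^d$. Any $L\notin Z_1\cup Z_2$ satisfies all the requirements, and such $L$ exists since a finite union of proper analytic subsets cannot exhaust the connected torus.

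This dimension count, with $N\ge2$ ensuring the strict inequality, is the step that really needs care. In genus $0$ it is trivial: take $L=\mathcal{O}(N-1)$.

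\emph{Step 3: the map.} Choose a basis $s_1,\dots,s_N$ of $H^0(\Sigma,L)$ and set $F(x):=[s_1(x):\dots:s_N(x)]$. This is well defined and holomorphic because $L$ is globally generated. By construction $F^*\mathcal{O}_{\cp^{N-1}}(1)\cong L$, with the coordinate hyperplane sections pulling back to the $s_i$.

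Suppose the image lay in a hyperplane $\sum a_iz_i=0$ with $a\neq0$. Then $\sum a_is_i$ would vanish identically, contradicting the linear independence of the $s_i$. Hence the image of $F$ is contained in no projective hyperplane.
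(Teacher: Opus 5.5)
Your proof is correct and follows essentially the same route as the paper: Serre duality and Riemann--Roch reduce nonspeciality and base-point-freeness to avoiding the images of $\operatorname{Sym}^{\gamma-N-1}(\Sigma)$ and $\Sigma\times\operatorname{Sym}^{\gamma-N}(\Sigma)$, which have dimension less than $\gamma$ in the Picard variety. The differences are cosmetic. You state global generation as the direct criterion $H^1(\Sigma,L(-p))=0$ for all $p$, and you parametrize the bad locus in $\operatorname{Pic}^{\gamma-N-1}(\Sigma)$ before applying $E\mapsto\mathcal{K}_\Sigma\otimes E^{-1}$. The paper instead argues by contradiction from a base point and maps directly into $\operatorname{Pic}^d(\Sigma)$.
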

\begin{proof}
    Set $d:=\gamma+N-1$. If $\gamma=0$, identify $\Sigma$ with
    $\cp^1$ and take $L=\mathcal{O}_{\cp^1}(N-1)$. We may therefore
    assume that $\gamma\ge1$.

    Let $\mathcal{K}_\Sigma$ denote the canonical bundle of $\Sigma$.
    We will exclude two proper analytic subsets of the Picard variety
    $\operatorname{Pic}^d(\Sigma)$, which has complex dimension $\gamma$.
    Recall that a holomorphic line bundle $L\to\Sigma$ is called \emph{special} if $H^1(\Sigma,L)\neq0$, and \emph{nonspecial} otherwise. By Serre duality, $L$ is special precisely when $\mathcal{K}_\Sigma\otimes L^{-1}$ admits a nonzero holomorphic section. Since
    \begin{align*}
        \deg\bigl(\mathcal{K}_\Sigma\otimes L^{-1}\bigr)
        =2\gamma-2-d=\gamma-N-1,
    \end{align*}
    the special locus is empty if $\gamma-N-1<0$. Otherwise, it is
    the image of the holomorphic map
    \begin{align*}
        \operatorname{Sym}^{\gamma-N-1}(\Sigma)
        &\rightarrow\operatorname{Pic}^d(\Sigma)\\
        D&\mapsto
        \mathcal{K}_\Sigma\otimes\mathcal{O}_\Sigma(-D).
    \end{align*}
    Here $\operatorname{Sym}^k(\Sigma)$ parametrizes effective divisors
    of degree $k$. Since the source is compact, its image is an analytic
    subset of dimension at most $\gamma-N-1<\gamma$.

    For any line bundle outside this locus, Riemann--Roch gives
    $h^0(\Sigma,L)=d+1-\gamma=N$. Suppose that such an $L$ has a base
    point $p\in\Sigma$. Then
    \begin{align*}
        h^0\bigl(\Sigma,L(-p)\bigr)=h^0(\Sigma,L)=N.
    \end{align*}
    Applying Riemann--Roch to $L(-p)$, we obtain
    \begin{align*}
        h^1\bigl(\Sigma,L(-p)\bigr)
        =N-(d-\gamma)=1.
    \end{align*}
    Serre duality therefore provides a nonzero holomorphic section of
    $\mathcal{K}_\Sigma\otimes L^{-1}(p)$. Its zero divisor $D$ has
    degree $\gamma-N$, and
    \begin{align*}
        L\cong\mathcal{K}_\Sigma\otimes\mathcal{O}_\Sigma(p-D).
    \end{align*}
    In particular, no such base point exists when $\gamma-N<0$.
    Otherwise, the nonspecial line bundles with a base point are
    contained in the image of
    \begin{align*}
        \Sigma\times\operatorname{Sym}^{\gamma-N}(\Sigma)
        &\rightarrow\operatorname{Pic}^d(\Sigma)\\
        (p,D)&\mapsto
        \mathcal{K}_\Sigma\otimes\mathcal{O}_\Sigma(p-D).
    \end{align*}
    This image is analytic and has dimension at most
    $1+\gamma-N<\gamma$. We can thus choose $L$ outside the union
    of these two proper analytic subsets. Such an $L$ is nonspecial
    and has no base points, and hence is globally generated.
    This proves \eqref{Equation: Controlled degree line bundle}.

    Choose a basis $s_1,\ldots,s_N$ of $H^0(\Sigma,L)$. Since these
    sections have no common zero, they define a holomorphic map
    \begin{align*}
        F:\Sigma&\rightarrow\cp^{N-1}, \quad F(p)=[s_1(p):\cdots:s_N(p)],
    \end{align*}
    where the homogeneous coordinates are computed in any local
    holomorphic frame of $L$. The evaluation map identifies
    $F^*\mathcal{O}_{\cp^{N-1}}(1)$ with $L$. Finally, a hyperplane
    containing $F(\Sigma)$ would give a nontrivial linear relation
    among $s_1,\ldots,s_N$, contradicting their linear independence.
\end{proof}

We call a holomorphic map to projective space \emph{linearly full} if
its image is contained in no projective hyperplane. For $N\ge2$, let
$\operatorname{Herm}(N)$ denote the real vector space of Hermitian
$N\times N$ matrices, equipped with the Hilbert--Schmidt inner product
\begin{align*}
    \langle A,B\rangle_{\operatorname{Herm}(N)}
    :=\operatorname{tr}(AB).
\end{align*}
We consider the projection map
\begin{align}\label{Equation: Projective projection map}
    P:\cp^{N-1}\rightarrow\operatorname{Herm}(N),
    \quad P([z]):=\frac{zz^*}{z^*z},
\end{align}
where $z\in\c^N\setminus\{0\}$ is viewed as a column vector and
$z^*$ is its conjugate transpose.

The following balancing statement is the projective-space case of the Bourguignon--Li--Yau averaging theorem of Biliotti and Ghigi \cite[Section~3, Definition 70 and Theorem 89]{BG13}. We give a direct proof under the measure
hypothesis needed here.

\begin{lemma}[Projective balancing]
\label{Lemma: Projective balancing}
    Let $\mu$ be a Borel probability measure on $\cp^{N-1}$ assigning
    zero mass to every projective hyperplane. Then there exists
    $G\in\operatorname{GL}(N,\c)$ such that
    \begin{align}\label{Equation: Projective balancing identity}
        \int_{\cp^{N-1}}P([Gz])\,d\mu([z])
        =\frac1N\operatorname{Id}_N.
    \end{align}
\end{lemma}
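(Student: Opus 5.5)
I would prove the lemma variationally, by minimizing a convex Kempf--Ness type functional on the space of positive definite Hermitian matrices. Every positive definite $H$ can be written as $H=G^*G$ with $G\in\operatorname{GL}(N,\c)$. The quantity $\frac{z^*G^*Gz}{z^*z}$ depends only on $[z]$ and on $H$. Set
\begin{align*}
    \Phi(H):=\int_{\cp^{N-1}}\log\frac{z^*Hz}{z^*z}\,d\mu([z])
    -\frac1N\log\det H,
\end{align*}
defined for $H$ positive definite. This functional is invariant under scaling $H\mapsto tH$, because both terms shift by $\log t$.

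The first step is to compute the variation along a path $H_t=G^*e^{tA}G$ with $A\in\operatorname{Herm}(N)$. Differentiating at $t=0$, with $w=Gz$, gives
\begin{align*}
    \frac{d}{dt}\Big|_{t=0}\Phi(H_t)
    =\int\frac{w^*Aw}{w^*w}\,d\mu-\frac1N\operatorname{tr}A
    =\operatorname{tr}\Bigl(A\Bigl(\int P([Gz])\,d\mu-\frac1N\operatorname{Id}_N\Bigr)\Bigr).
\end{align*}
So a critical point $H=G^*G$ yields exactly the balancing identity \eqref{Equation: Projective balancing identity}. The reason is that the matrix $\int P([Gz])\,d\mu-\frac1N\operatorname{Id}_N$ is Hermitian and orthogonal to all of $\operatorname{Herm}(N)$, so it vanishes. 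In addition, $t\mapsto\log(w^*e^{tA}w)$ is convex, so $\Phi$ is geodesically convex.

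It therefore suffices to show that $\Phi$ attains its minimum on the normalized slice $\{\det H=1\}$. This properness is the main obstacle, and it is where the hypothesis on hyperplanes enters. I would argue as follows.

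\emph{Reduction to diagonal matrices.} Write $H=U^*\operatorname{diag}(e^{\lambda_1},\dots,e^{\lambda_N})U$ with $U$ unitary and $\sum\lambda_i=0$. Suppose a sequence $H_k$ leaves every compact set. After passing to a subsequence, $U_k\to U$ and $\lambda^{(k)}/|\lambda^{(k)}|\to\theta$, where $\theta\neq0$ and $\sum\theta_i=0$. Order the entries so that $\theta_1\ge\dots\ge\theta_N$.

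\emph{Lower bound on the integrand.} Since $z^*Hz/z^*z\ge e^{\lambda_j}|(Uz)_j|^2/|z|^2$ for each $j$, the integrand is bounded below by
\begin{align*}
    \max_j\bigl(\lambda_j+\log|(Uz)_j|^2/|z|^2\bigr).
\end{align*}
Taking $j=1$ gives the estimate
\begin{align*}
    \Phi(H_k)\ge|\lambda^{(k)}|\theta_1+\int\log\frac{|(U_kz)_1|^2}{|z|^2}\,d\mu+o(|\lambda^{(k)}|).
\end{align*}
Here $\theta_1>0$, because the entries of $\theta$ sum to zero and $\theta\neq0$.

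\emph{Controlling the log term.} The difficulty is that this log integral might be $-\infty$, or might not be uniformly bounded in $U_k$. A bare hyperplane-zero-mass assumption gives $\mu$-a.e. finiteness but not integrability. I would handle this with a compactness argument on $U(N)$. Fix $\epsilon>0$. For every unit vector $u$, the hyperplane $\{u^*z=0\}$ has zero mass, so by continuity of measure there is $\delta>0$ with $\mu(\{|u^*z|^2<\delta|z|^2\})<\epsilon$. Compactness of the sphere makes $\delta$ uniform in $u$.

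\emph{Truncation.} I would then replace $\log$ by the truncation $\max(\log(\cdot),\log\delta)$, using $\max_j$ over all $j$ to avoid bad sets. On the set where $|(U_kz)_1|^2\ge\delta|z|^2$, the integrand is at least $\lambda_1+\log\delta$. On the complement, of measure less than $\epsilon$, I only use the bound $\ge\lambda_N+\log(1/N)$, obtained by choosing the coordinate $j$ with $|(Uz)_j|^2\ge|z|^2/N$. This gives
\begin{align*}
    \Phi(H_k)\ge(1-\epsilon)\lambda_1+\epsilon\lambda_N+C_\delta.
\end{align*}
Choosing $\epsilon$ small relative to $\theta_1/|\theta_N|$ yields $\Phi(H_k)\to+\infty$.

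Hence $\Phi$ is bounded below and proper on $\{\det H=1\}$. By continuity, handled via the same truncation or via monotone convergence, a minimizer exists. The minimizer is a critical point, and writing it as $G^*G$ gives \eqref{Equation: Projective balancing identity}.

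If properness on the log level proves delicate, I would fall back on a fixed-point or degree argument. The map $G\mapsto\int P([Gz])\,d\mu$ sends $\operatorname{GL}(N,\c)/\mathbb{C}^*$ into the interior of the set of density matrices, since the hyperplane condition makes the image positive definite. Near the boundary it pushes mass toward a proper subspace, and one can check that the image of this map covers $\frac1N\operatorname{Id}_N$.
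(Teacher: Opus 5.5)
Your proposal is correct and is essentially the paper's argument. The paper minimizes the same functional $\int\log\frac{z^*Bz}{z^*z}\,d\mu$ on $\{B>0,\ \det B=1\}$; your $-\frac1N\log\det H$ term only makes it scale-invariant, so you can vary in all Hermitian directions rather than just trace-free ones. The paper then reads off the balancing identity from the first variation along $B^{1/2}e^{tA}B^{1/2}$, with $G=B^{1/2}$.

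The only real difference is in the properness step. The paper avoids your integrability concern by dividing by $r_j=\lambda_{1,j}-\lambda_{N,j}$, so the integrands are bounded by $1$ and dominated convergence applies off the single limiting hyperplane. Your choice of $\delta$ uniform in $u$ instead gives a quantitative coercivity bound. Using $\lambda_N\ge-(N-1)\lambda_1$ and $\epsilon<1/N$, that bound even holds on the whole slice without extracting subsequences.
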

\begin{proof}
    On the space
    \begin{align*}
        \mathcal{P}:=
        \{B\in\operatorname{Herm}(N):B>0,\ \det B=1\},
    \end{align*}
    define
    \begin{align*}
        \Phi(B):=\int_{\cp^{N-1}}
        \log\frac{z^*Bz}{z^*z}\,d\mu([z]).
    \end{align*}
    The integrand is independent of the representative $z$, and is
    bounded between the logarithms of the smallest and largest
    eigenvalues of $B$. In particular, $\Phi$ is finite and continuous.

    We first prove that its sublevel sets are compact. Suppose, to
    the contrary, that there is a sequence $(B_j)\subset\mathcal{P}$
    leaving every compact subset of $\mathcal{P}$ while $\Phi(B_j)$
    remains bounded above. Write
    \begin{align*}
        B_j&=U_j\operatorname{diag}
        (e^{\lambda_{1,j}},\ldots,e^{\lambda_{N,j}})U_j^*,
        \qquad U_j\in\operatorname{U}(N),\\
        \lambda_{1,j}&\ge\cdots\ge\lambda_{N,j},
        \qquad\sum_{a=1}^N\lambda_{a,j}=0.
    \end{align*}
    Then $r_j:=\lambda_{1,j}-\lambda_{N,j}\to+\infty$.
    Indeed, boundedness of $r_j$, together with the zero-sum condition,
    would bound every eigenvalue of $B_j$ from above and away from zero.
    After passing to a subsequence, we may assume that
    \begin{align*}
        U_j\rightarrow U,
        \qquad
        \frac{\lambda_{a,j}}{r_j}\rightarrow a_a
        \quad\text{for }a=1,\ldots,N.
    \end{align*}
    The limiting numbers satisfy
    \begin{align*}
        a_1\ge\cdots\ge a_N,
        \qquad\sum_{a=1}^N a_a=0,
        \qquad a_1-a_N=1.
    \end{align*}
    Thus $0\le(N-1)a_1+a_N=Na_1-1$, and $a_1\ge1/N$.

    Let
    \begin{align*}
        H:=\{[z]\in\cp^{N-1}:(U^*z)_1=0\}.
    \end{align*}
    For every $[z]\notin H$, the first coordinate of $U_j^*z$
    converges to a nonzero number. Therefore, for all sufficiently
    large $j$,
    \begin{align*}
        \lambda_{1,j}
        +\log\frac{|(U_j^*z)_1|^2}{z^*z}
        \le\log\frac{z^*B_jz}{z^*z}
        \le\lambda_{1,j}.
    \end{align*}
    Dividing by $r_j$ gives
    \begin{align*}
        \frac1{r_j}\log\frac{z^*B_jz}{z^*z}
        \rightarrow a_1
        \qquad\text{for every }[z]\notin H.
    \end{align*}
    Moreover, the zero-sum condition implies
    $\lambda_{N,j}\le0\le\lambda_{1,j}$, so
    \begin{align*}
        -1\le\frac{\lambda_{N,j}}{r_j}
        \le\frac1{r_j}\log\frac{z^*B_jz}{z^*z}
        \le\frac{\lambda_{1,j}}{r_j}\le1.
    \end{align*}
    Since $\mu(H)=0$, the dominated convergence theorem yields
    \begin{align*}
        \frac{\Phi(B_j)}{r_j}\rightarrow a_1\ge\frac1N.
    \end{align*}
    This contradicts the assumed upper bound for $\Phi(B_j)$.
    Hence the sublevel sets of $\Phi$ are compact, and $\Phi$ attains
    its minimum at some $B\in\mathcal{P}$.

    For any trace-free $A\in\operatorname{Herm}(N)$, the curve
    $B_t:=B^{1/2}e^{tA}B^{1/2}$ lies in $\mathcal{P}$. Differentiating
    at $t=0$, we obtain
    \begin{align*}
        0
        &=\left.\frac{d}{dt}\right|_{t=0}\Phi(B_t)\\
        &=\int_{\cp^{N-1}}
        \frac{z^*B^{1/2}AB^{1/2}z}{z^*Bz}\,d\mu([z])\\
        &=\operatorname{tr}\left(
        A\int_{\cp^{N-1}}P([B^{1/2}z])\,d\mu([z])\right).
    \end{align*}
    Differentiation under the integral is justified by the uniform
    bounds for the integrand and its derivative for $t$ in a compact
    neighborhood of zero. The averaged matrix is therefore orthogonal
    to all trace-free Hermitian matrices and hence is a scalar multiple
    of $\operatorname{Id}_N$. Its trace is one. Taking $G:=B^{1/2}$
    proves \eqref{Equation: Projective balancing identity}.
\end{proof}

We normalize the Fubini--Study form on $\cp^{N-1}$ by
\begin{align}\label{Equation: Fubini Study normalization}
    \omega_{\mathrm{FS}}
    :=\frac{i}{2}\partial\bar\partial\log(1+|w|^2),
    \qquad\int_{\cp^1}\omega_{\mathrm{FS}}=\pi,
\end{align}
and denote its associated Riemannian metric by $g_{\mathrm{FS}}$.
The next lemma records the identities used in the spectral argument.
For the corresponding moment-map energy formula, see also
\cite[Equation (18)]{AGL}.

\begin{lemma}[Projection coordinates and energy]
\label{Lemma: Projection coordinates and energy}
    Let $\Psi:\Sigma\to\operatorname{Herm}(N)$ be a smooth map taking
    values in rank-one orthogonal projections. For an orthonormal basis
    $A_1,\ldots,A_{N^2}$ of $\operatorname{Herm}(N)$, set
    $f_\alpha:=\operatorname{tr}(A_\alpha\Psi)$. Then
    \begin{align}\label{Equation: Projection Parseval identities}
        \sum_{\alpha=1}^{N^2}f_\alpha^2=1,
        \qquad
        \sum_{\alpha=1}^{N^2}|\nabla f_\alpha|_h^2=|d\Psi|_h^2.
    \end{align}
    Moreover, for every $A\in\operatorname{Herm}(N)$,
    \begin{align}\label{Equation: Projection coordinate bound}
        |\operatorname{tr}(A\Psi)|^2
        \le\operatorname{tr}(A^2\Psi).
    \end{align}
    If $F:\Sigma\to\cp^{N-1}$ is holomorphic and
    $d:=\deg F^*\mathcal{O}_{\cp^{N-1}}(1)$, then
    \begin{align}\label{Equation: Holomorphic projection energy}
        \int_\Sigma|d(P\circ F)|_h^2\,\omega_\Sigma=4\pi d.
    \end{align}
    The same energy identity holds after any projective linear
    transformation of $F$.
\end{lemma}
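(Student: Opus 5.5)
The plan is to treat the three assertions separately: Parseval, the coordinate bound, and then the energy identity via Fubini--Study geometry and degree.

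\emph{Parseval identities.} Since $A_1,\ldots,A_{N^2}$ is an orthonormal basis of $\operatorname{Herm}(N)$ and $\Psi(p)\in\operatorname{Herm}(N)$, we have $\Psi=\sum_\alpha f_\alpha A_\alpha$. Hence
\begin{align*}
\sum_\alpha f_\alpha^2=\operatorname{tr}(\Psi^2)=\operatorname{tr}\Psi=1,
\end{align*}
because $\Psi$ is a rank-one orthogonal projection. Differentiating the expansion gives $d\Psi=\sum_\alpha df_\alpha\,A_\alpha$, and orthonormality then yields $\sum_\alpha|\nabla f_\alpha|_h^2=|d\Psi|_h^2$. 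Here $|d\Psi|_h^2$ means the trace over an $h$-orthonormal frame of the Hilbert--Schmidt norms.

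\emph{Coordinate bound.} Write $\Psi=uu^*$ with $|u|=1$. Then
\begin{align*}
\operatorname{tr}(A\Psi)=u^*Au=\langle Au,u\rangle,
\qquad
\operatorname{tr}(A^2\Psi)=|Au|^2,
\end{align*}
and Cauchy--Schwarz gives $|\langle Au,u\rangle|^2\le|Au|^2$.

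\emph{Energy identity.} The key step is to show that $P:(\cp^{N-1},g_{\mathrm{FS}})\to\operatorname{Herm}(N)$ is a homothetic embedding with $P^*\langle\cdot,\cdot\rangle=c\,g_{\mathrm{FS}}$, and to identify $c$ with the normalization \eqref{Equation: Fubini Study normalization}. By $\operatorname{U}(N)$-equivariance it suffices to check this at $[e_1]$ with $z=e_1+tw$, $w\perp e_1$. To first order, $dP(w)=we_1^*+e_1w^*$, so $|dP(w)|^2_{HS}=2|w|^2$. The metric induced by $\omega_{\mathrm{FS}}=\frac i2\partial\bar\partial\log(1+|w|^2)$ at the origin is $\frac12|dw|^2$ in the real sense: the Kähler form $\frac i2dw\wedge d\bar w$ corresponds to the Euclidean metric, and with the factor $\frac i2$ the metric at $0$ is exactly Euclidean $|w|^2$. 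Thus $P^*\langle\cdot,\cdot\rangle=2g_{\mathrm{FS}}$. I would double-check this constant against $\int_{\cp^1}\omega_{\mathrm{FS}}=\pi$, for instance by noting that $P(\cp^1)$ is a round sphere of radius $1/\sqrt2$ in the trace-one slice, with area $2\pi=2\cdot\pi$, which is consistent.

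Since $F$ is holomorphic, hence conformal, $|dF|_h^2\,\omega_\Sigma=2F^*\omega_{\mathrm{FS}}$ pointwise. For a conformal map from a surface, the energy density equals twice the area Jacobian, and the pullback of the Kähler form equals the area Jacobian with orientation positive. Hence
\begin{align*}
\int_\Sigma|d(P\circ F)|^2\,\omega_\Sigma
=2\int_\Sigma|dF|^2_{g_{\mathrm{FS}}}\,\omega_\Sigma
=4\int_\Sigma F^*\omega_{\mathrm{FS}}.
\end{align*}
Because $[\omega_{\mathrm{FS}}]$ is $\pi$ times the generator dual to a line, i.e.\ $\pi c_1(\mathcal{O}(1))$, we get $\int_\Sigma F^*\omega_{\mathrm{FS}}=\pi d$, and the total is $4\pi d$.

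\emph{Projective invariance.} For $G\in\operatorname{GL}(N,\c)$, the map $G\circ F$ is still holomorphic and $(G\circ F)^*\mathcal{O}(1)\cong F^*\mathcal{O}(1)$, since $G$ acts on $\mathcal{O}(-1)$ and therefore preserves degree. Equivalently, $G^*\omega_{\mathrm{FS}}$ is cohomologous to $\omega_{\mathrm{FS}}$ because $\operatorname{GL}(N,\c)$ is connected. The same computation therefore gives $4\pi d$. The main obstacle is the bookkeeping of constants, namely $2g_{\mathrm{FS}}$ and the factor relating energy to the pulled-back Kähler form, which I would verify on the identity map $\cp^1\to\cp^1$ with $d=1$.
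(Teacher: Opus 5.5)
Your proposal is correct and follows the paper's proof essentially step by step. You use Parseval in the Hilbert--Schmidt basis, Cauchy--Schwarz for $\Psi=uu^*$, the base-point computation $|dP(w)|^2=2|w|^2$ with unitary equivariance to get $P^*g_{\operatorname{Herm}(N)}=2g_{\mathrm{FS}}$, the pointwise identity $|dF|^2\omega_\Sigma=2F^*\omega_{\mathrm{FS}}$, $[\omega_{\mathrm{FS}}/\pi]=c_1(\mathcal{O}(1))$, and invariance of the degree under $\operatorname{GL}(N,\c)$. Your aside that the Fubini--Study metric at the origin is ``$\frac12|dw|^2$'' is confusingly phrased, but the fact you actually use, $g_{\mathrm{FS},0}(\xi,\xi)=|\xi|^2$, is correct, and your $\cp^1$ area check confirms the constant.
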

\begin{proof}
    Since $\Psi^2=\Psi$ and $\operatorname{tr}(\Psi)=1$, we have
    $|\Psi|^2=\operatorname{tr}(\Psi^2)=1$. Parseval's identity gives
    the first equality in
    \eqref{Equation: Projection Parseval identities}. Applying
    Parseval's identity to $d\Psi(e_1)$ and $d\Psi(e_2)$, where
    $e_1,e_2$ is a local orthonormal tangent frame, gives the second.

    At any point, write $\Psi=zz^*$ with $z\in\c^N$ and $z^*z=1$.
    Since $A=A^*$, the Cauchy--Schwarz inequality yields
    \begin{align*}
        |\operatorname{tr}(A\Psi)|^2
        =|z^*Az|^2
        \le|Az|^2
        =z^*A^2z
        =\operatorname{tr}(A^2\Psi),
    \end{align*}
    proving \eqref{Equation: Projection coordinate bound}.

    To compute the energy, use the affine chart $[1:w]$ centered at
    $[1:0:\cdots:0]$. In this chart,
    \begin{align*}
        P([1:w])=\frac1{1+|w|^2}
        \begin{pmatrix}
            1&w^*\\
            w&ww^*
        \end{pmatrix}.
    \end{align*}
    Thus, for $\xi\in\c^{N-1}$,
    \begin{align*}
        dP_0(\xi)=
        \begin{pmatrix}
            0&\xi^*\\
            \xi&0
        \end{pmatrix},
        \qquad |dP_0(\xi)|^2=2|\xi|^2.
    \end{align*}
    Under the normalization
    \eqref{Equation: Fubini Study normalization},
    $g_{\mathrm{FS},0}(\xi,\xi)=|\xi|^2$. Both metrics are invariant
    under the unitary action, which is transitive on $\cp^{N-1}$.
    Consequently,
    \begin{align*}
        P^*g_{\operatorname{Herm}(N)}=2g_{\mathrm{FS}}.
    \end{align*}
    Since $F$ is holomorphic, its differential is complex linear and
    satisfies the pointwise identity
    \begin{align*}
        |dF|_{h,g_{\mathrm{FS}}}^2\,\omega_\Sigma
        =2F^*\omega_{\mathrm{FS}}.
    \end{align*}
    The identity also holds wherever $dF=0$, since both sides vanish there.
    As $[\omega_{\mathrm{FS}}/\pi]
    =c_1(\mathcal{O}_{\cp^{N-1}}(1))$, we obtain
    \begin{align*}
        \int_\Sigma|d(P\circ F)|_h^2\,\omega_\Sigma
        &=2\int_\Sigma|dF|_{h,g_{\mathrm{FS}}}^2\,\omega_\Sigma\\
        &=4\int_\Sigma F^*\omega_{\mathrm{FS}}
        =4\pi d.
    \end{align*}
    Finally, a projective linear transformation $T_G$ is holomorphic
    and satisfies
    $T_G^*\mathcal{O}_{\cp^{N-1}}(1)
    \cong\mathcal{O}_{\cp^{N-1}}(1)$. Hence $T_G\circ F$ is
    holomorphic of the same degree, and the preceding computation
    applies to it as well.
\end{proof}

\begin{proposition}[Index control for balanced projection coordinates]
\label{Proposition: Balanced projection index estimate}
    Let $V\in C^\infty(\Sigma,\mathbb{R})$, and let
    $\Psi:\Sigma\to\operatorname{Herm}(N)$ be smooth and take values
    in rank-one orthogonal projections. Suppose that
    $W\in L^1(\Sigma,\omega_\Sigma)$ satisfies $W\ge\max\{V,0\}$
    almost everywhere, $\int_\Sigma W\,\omega_\Sigma>0$, and
    \begin{align}\label{Equation: Balanced weight hypothesis}
        \int_\Sigma W\Psi\,\omega_\Sigma
        =\frac1N\left(\int_\Sigma W\,\omega_\Sigma\right)
        \operatorname{Id}_N.
    \end{align}
    Then
    \begin{align}\label{Equation: Balanced projection index estimate}
        \int_\Sigma V\,\omega_\Sigma
        \le\int_\Sigma|d\Psi|_h^2\,\omega_\Sigma
        +\frac{I_V}{N}\int_\Sigma W\,\omega_\Sigma.
    \end{align}
\end{proposition}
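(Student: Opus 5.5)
Set $m:=\int_\Sigma W\,\omega_\Sigma>0$. Let $E\subset C^\infty(\Sigma)$ be the span of the eigenfunctions of $\mathcal{L}_V$ with strictly negative eigenvalues, so $\dim E=I_V$. Let $\Pi_E$ be the $L^2$-orthogonal projection onto $E$. Consider the real linear map
\begin{align*}
    T:\operatorname{Herm}(N)\rightarrow E,
    \qquad T(A):=\Pi_E\bigl(\operatorname{tr}(A\Psi)\bigr).
\end{align*}
Its rank is at most $I_V$, so $\dim\ker T\ge N^2-I_V$. Choose a Hilbert--Schmidt orthonormal basis $A_1,\ldots,A_{N^2}$ of $\operatorname{Herm}(N)$ whose first $N^2-k$ elements, with $k:=\operatorname{rank}T\le I_V$, span $\ker T$. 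Set $f_\alpha:=\operatorname{tr}(A_\alpha\Psi)$.

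For $\alpha\le N^2-k$, $f_\alpha$ is $L^2$-orthogonal to $E$. By the spectral decomposition of $\mathcal{L}_V$, this gives $Q_V(f_\alpha)\ge0$. For the remaining $k$ indices, drop the gradient term and use $V\le V_+\le W$:
\begin{align*}
    Q_V(f_\alpha)\ge-\int_\Sigma Vf_\alpha^2\,\omega_\Sigma
    \ge-\int_\Sigma Wf_\alpha^2\,\omega_\Sigma.
\end{align*}
Combining \eqref{Equation: Projection coordinate bound} with the balancing hypothesis \eqref{Equation: Balanced weight hypothesis} gives
\begin{align*}
    \int_\Sigma Wf_\alpha^2\,\omega_\Sigma
    \le\operatorname{tr}\Bigl(A_\alpha^2\int_\Sigma W\Psi\,\omega_\Sigma\Bigr)
    =\frac mN\operatorname{tr}(A_\alpha^2)=\frac mN,
\end{align*}
using $W\ge0$. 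Hence each of these $k$ coordinates satisfies $Q_V(f_\alpha)\ge-m/N$.

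Now sum over all $\alpha$. By the Parseval identities \eqref{Equation: Projection Parseval identities},
\begin{align*}
    \sum_{\alpha=1}^{N^2}Q_V(f_\alpha)
    =\int_\Sigma|d\Psi|_h^2\,\omega_\Sigma-\int_\Sigma V\,\omega_\Sigma.
\end{align*}
The lower bounds above make this sum at least $-km/N\ge-I_Vm/N$. Rearranging yields \eqref{Equation: Balanced projection index estimate}.

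The only point needing care is regularity. Since $W$ is merely $L^1$, the bound $\int_\Sigma Wf_\alpha^2\,\omega_\Sigma\le m/N$ must be justified pointwise almost everywhere. This is fine because $f_\alpha^2\le\operatorname{tr}(A_\alpha^2\Psi)$ holds pointwise and the integrands are bounded. The spectral claim that $f\perp E$ implies $Q_V(f)\ge0$ is standard for smooth $f$, since $\mathcal{L}_V$ has a discrete spectrum with a smooth $L^2$-orthonormal eigenbasis. The main step is the choice of basis adapted to $\ker T$; the rest is bookkeeping.
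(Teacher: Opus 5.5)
Your proposal is correct and follows essentially the same route as the paper's proof. You project the coordinate functions $\operatorname{tr}(A\Psi)$ onto the negative eigenspace, choose a Hilbert--Schmidt orthonormal basis adapted to the kernel of that rank-$\le I_V$ map, bound the at most $I_V$ remaining coordinates via $V\le W$ and the balancing identity, and sum using the Parseval identities.
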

\begin{proof}
    Write $m_W:=\int_\Sigma W\,\omega_\Sigma$. For
    $A\in\operatorname{Herm}(N)$, set $f_A:=\operatorname{tr}(A\Psi)$.
    By \eqref{Equation: Projection coordinate bound}, the
    nonnegativity of $W$, and
    \eqref{Equation: Balanced weight hypothesis},
    \begin{align}\label{Equation: Weighted projection coordinate estimate}
        \int_\Sigma Wf_A^2\,\omega_\Sigma
        &\le\int_\Sigma W\operatorname{tr}(A^2\Psi)\,
        \omega_\Sigma\notag\\
        &=\operatorname{tr}\left(
        A^2\int_\Sigma W\Psi\,\omega_\Sigma\right)
        =\frac{m_W}{N}\operatorname{tr}(A^2).
    \end{align}

    Let $E_-$ be the real span of the eigenfunctions of
    $\mathcal{L}_V$ associated with strictly negative eigenvalues,
    and let $\Pi_-$ be the orthogonal projection onto $E_-$ in
    $L^2(\Sigma,\omega_\Sigma)$. The spectral theorem gives
    $\dim E_-=I_V$ and
    \begin{align}\label{Equation: Nonnegative spectral complement}
        Q_V(f)\ge0
        \qquad\text{whenever }f\in C^\infty(\Sigma,\mathbb{R})
        \text{ and }\Pi_-f=0.
    \end{align}
    Consider the real linear map
    \begin{align*}
        \mathcal{T}:\operatorname{Herm}(N)\rightarrow E_-,
        \qquad\mathcal{T}(A):=\Pi_-f_A,
    \end{align*}
    and set $r:=\operatorname{rank}\mathcal{T}\le I_V$. Choose an
    orthonormal basis $A_1,\ldots,A_{N^2}$ of
    $\operatorname{Herm}(N)$ such that
    $A_{r+1},\ldots,A_{N^2}$ span $\ker\mathcal{T}$, and write
    $f_\alpha:=f_{A_\alpha}$. For $\alpha>r$,
    \eqref{Equation: Nonnegative spectral complement} gives
    \begin{align*}
        \int_\Sigma Vf_\alpha^2\,\omega_\Sigma
        \le\int_\Sigma|\nabla f_\alpha|_h^2\,\omega_\Sigma.
    \end{align*}
    For $\alpha\le r$, the inequality $V\le W$ and
    \eqref{Equation: Weighted projection coordinate estimate} imply
    \begin{align*}
        \int_\Sigma Vf_\alpha^2\,\omega_\Sigma
        \le\int_\Sigma Wf_\alpha^2\,\omega_\Sigma
        \le\frac{m_W}{N},
    \end{align*}
    since $\operatorname{tr}(A_\alpha^2)=1$. Summing these inequalities
    and using \eqref{Equation: Projection Parseval identities}, we
    conclude that
    \begin{align*}
        \int_\Sigma V\,\omega_\Sigma
        &=\sum_{\alpha=1}^{N^2}
        \int_\Sigma Vf_\alpha^2\,\omega_\Sigma\\
        &\le\sum_{\alpha=r+1}^{N^2}
        \int_\Sigma|\nabla f_\alpha|_h^2\,\omega_\Sigma
        +\frac{r\,m_W}{N}\\
        &\le\int_\Sigma|d\Psi|_h^2\,\omega_\Sigma
        +\frac{I_V\,m_W}{N}.
    \end{align*}
    The orthonormal basis used here is a basis of the full real matrix
    space $\operatorname{Herm}(N)$. No $L^2$-orthogonality among the
    functions $f_\alpha$ is required.
\end{proof}

\begin{proposition}[The spectral estimate from a holomorphic map]
\label{Proposition: Holomorphic spectral estimate}
    Suppose that $\Sigma$ admits a linearly full holomorphic map
    $F:\Sigma\to\cp^{N-1}$, with $N\ge2$, and set
    $d:=\deg F^*\mathcal{O}_{\cp^{N-1}}(1)$. Then, for every
    $V\in C^\infty(\Sigma,\mathbb{R})$,
    \begin{align}\label{Equation: Holomorphic spectral estimate}
        \int_\Sigma V\,\omega_\Sigma
        \le4\pi d+\frac{I_V}{N}
        \int_\Sigma V_+\,\omega_\Sigma.
    \end{align}
    Equivalently,
    \begin{align}\label{Equation: Rearranged holomorphic estimate}
        (N-I_V)\int_\Sigma V\,\omega_\Sigma
        \le4\pi Nd+I_V\int_\Sigma V_-\,\omega_\Sigma.
    \end{align}
\end{proposition}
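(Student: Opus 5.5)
The plan is to combine Lemma~\ref{Lemma: Projective balancing}, Lemma~\ref{Lemma: Projection coordinates and energy} and Proposition~\ref{Proposition: Balanced projection index estimate}, using the weight $W_\varepsilon:=V_++\varepsilon$ for $\varepsilon>0$ and then letting $\varepsilon\downarrow0$. The weight is positive, smooth enough (Lipschitz, hence integrable), satisfies $W_\varepsilon\ge\max\{V,0\}$, and has $m_\varepsilon:=\int_\Sigma W_\varepsilon\,\omega_\Sigma\ge\varepsilon\operatorname{Area}(\Sigma)>0$.

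\textbf{Step 1: balancing.} I will balance against the push-forward measure
\begin{align*}
    \mu:=F_*\bigl(m_\varepsilon^{-1}W_\varepsilon\,\omega_\Sigma\bigr),
\end{align*}
which is a Borel probability measure on $\cp^{N-1}$. To apply Lemma~\ref{Lemma: Projective balancing}, I must check that $\mu(H)=0$ for every projective hyperplane $H$. Because $F$ is linearly full, $F^{-1}(H)$ is the zero set of a nontrivial holomorphic section of $F^*\mathcal{O}_{\cp^{N-1}}(1)$ on the connected surface $\Sigma$. It is therefore finite, and so has zero measure for the absolutely continuous measure $W_\varepsilon\,\omega_\Sigma$. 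This is the only place linear fullness is used, and I expect it to be the one genuinely delicate point; it is still short.

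\textbf{Step 2: the balanced map.} The lemma yields $G\in\operatorname{GL}(N,\c)$, and I set $\Psi:=P\circ G\circ F$. This map is smooth and takes values in rank-one projections. Changing variables, the balancing identity becomes exactly \eqref{Equation: Balanced weight hypothesis} with $W=W_\varepsilon$. By Lemma~\ref{Lemma: Projection coordinates and energy}, applied to the projective transformation $G\circ F$,
\begin{align*}
    \int_\Sigma|d\Psi|_h^2\,\omega_\Sigma=4\pi d.
\end{align*}
Proposition~\ref{Proposition: Balanced projection index estimate} then gives
\begin{align*}
    \int_\Sigma V\,\omega_\Sigma
    \le4\pi d+\frac{I_V}{N}\Bigl(\int_\Sigma V_+\,\omega_\Sigma+\varepsilon\operatorname{Area}(\Sigma)\Bigr).
\end{align*}
Letting $\varepsilon\downarrow0$ yields \eqref{Equation: Holomorphic spectral estimate}.

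\textbf{Step 3: the rearranged form.} For \eqref{Equation: Rearranged holomorphic estimate}, I will multiply \eqref{Equation: Holomorphic spectral estimate} by $N>0$ and substitute $\int V_+=\int V+\int V_-$. This gives
\begin{align*}
    N\int_\Sigma V\,\omega_\Sigma\le4\pi Nd+I_V\int_\Sigma V\,\omega_\Sigma+I_V\int_\Sigma V_-\,\omega_\Sigma,
\end{align*}
which rearranges to \eqref{Equation: Rearranged holomorphic estimate}. Each step is reversible (divide by $N$ and add back), so the two forms are equivalent.
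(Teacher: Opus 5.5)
Your proposal is correct and matches the paper's proof step for step. Both use the weight $W_\varepsilon=V_++\varepsilon$, balance the push-forward measure (with linear fullness making hyperplanes null), apply the $4\pi d$ energy identity to the transformed map and then the balanced-projection index estimate, let $\varepsilon\downarrow0$, and finally rearrange using $V_+=V+V_-$.
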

\begin{proof}
    Fix $\varepsilon>0$ and define
    \begin{align*}
        W_\varepsilon:=V_++\varepsilon,
        \qquad
        m_\varepsilon:=\int_\Sigma W_\varepsilon\,\omega_\Sigma,
        \qquad
        \mu_\varepsilon:=F_\#\left(
        \frac{W_\varepsilon}{m_\varepsilon}\,\omega_\Sigma\right).
    \end{align*}
    Thus $\mu_\varepsilon$ is a Borel probability measure on
    $\cp^{N-1}$. If $H\subset\cp^{N-1}$ is a hyperplane, its defining
    linear form pulls back to a holomorphic section of
    $F^*\mathcal{O}_{\cp^{N-1}}(1)$. This section is not identically
    zero because $F$ is linearly full. Hence $F^{-1}(H)$ is finite,
    and $\mu_\varepsilon(H)=0$.

    By Lemma~\ref{Lemma: Projective balancing}, there exists
    $G_\varepsilon\in\operatorname{GL}(N,\c)$ such that, writing
    $T_{G_\varepsilon}([z]):=[G_\varepsilon z]$ and
    $\Psi_\varepsilon:=P\circ T_{G_\varepsilon}\circ F$, we have
    \begin{align*}
        \int_\Sigma W_\varepsilon\Psi_\varepsilon\,\omega_\Sigma
        =\frac{m_\varepsilon}{N}\operatorname{Id}_N.
    \end{align*}
    Applying
    Proposition~\ref{Proposition: Balanced projection index estimate}
    and Lemma~\ref{Lemma: Projection coordinates and energy} gives
    \begin{align*}
        \int_\Sigma V\,\omega_\Sigma
        &\le\int_\Sigma|d\Psi_\varepsilon|_h^2\,\omega_\Sigma
        +\frac{I_V}{N}m_\varepsilon\\
        &=4\pi d+\frac{I_V}{N}
        \left(\int_\Sigma V_+\,\omega_\Sigma
        +\varepsilon\operatorname{Area}_h(\Sigma)\right).
    \end{align*}
    Letting $\varepsilon\downarrow0$ proves
    \eqref{Equation: Holomorphic spectral estimate}. This passage
    requires no convergence of $G_\varepsilon$, since the final
    upper bound is independent of the chosen balancing transformation.
    Finally, \eqref{Equation: Rearranged holomorphic estimate} follows
    by multiplying by $N$ and using $V_+=V+V_-$.
\end{proof}

\begin{proof}[Proof of Theorem~\ref{Theorem: Scalar spectral inequality}]
    Fix an integer $N\ge2$. By
    Lemma~\ref{Lemma: Holomorphic maps of controlled degree}, there
    exists a linearly full holomorphic map
    $F:\Sigma\to\cp^{N-1}$ of degree $\gamma+N-1$. Applying
    Proposition~\ref{Proposition: Holomorphic spectral estimate}
    to this map yields
    \begin{align*}
        \int_\Sigma V\,\omega_\Sigma
        \le4\pi(\gamma+N-1)
        +\frac{I_V}{N}\int_\Sigma V_+\,\omega_\Sigma,
    \end{align*}
    as claimed.
\end{proof}
\begin{remark}[Sharpness in the scalar estimate]
\label{Remark: Sharp normalization of the scalar estimate}
    Consider the totally geodesic equatorial sphere
    $\mathbb{S}^2\subset\mathbb{S}^3$, in the round three-sphere of constant sectional curvature $1$. Its induced metric is the unit round
    metric, and its Jacobi potential is
    \begin{align*}
        V=|A_{\mathbb{S}^2}|^2
        +\operatorname{Ric}_{\mathbb{S}^3}(\nu,\nu)=2.
    \end{align*}
    The operator $-\Delta-2$ has one strictly negative eigenvalue,
    corresponding to the constant functions, and a
    three-dimensional kernel. Thus $I_V=1$, and
    Theorem~\ref{Theorem: Scalar spectral inequality} with $N=2$
    gives equality:
    \begin{align*}
        \int_{\mathbb{S}^2}V\,\omega_{\mathbb{S}^2}
        =8\pi
        =4\pi+\frac12\,8\pi.
    \end{align*}
    Consequently, the coefficient $4\pi$ multiplying $\gamma+N-1$
    cannot be replaced by a smaller universal constant while
    retaining the term
    $(I_V/N)\int_\Sigma V_+\,\omega_\Sigma$
    and the same range of parameters.
\end{remark}
\section{Curvature potentials and two-sided minimal immersions}
\label{Section: Curvature potentials}

We now apply the scalar spectral inequality to potentials of the form
$V=-2K_\Sigma+b$, where $K_\Sigma$ is the Gauss curvature. We retain a
constant shift in the Jacobi potential for the localization argument
in the next section. The notation $Q_V$ and $I_V$ is as in
Section~\ref{Section: Projective test functions}.

For a minimal immersion $\iota:\Sigma\to(M^3,\bar g)$, we use the
induced metric $h:=\iota^*\bar g$ on the domain and compute all areas
and integrals with respect to this metric. We write $\omega_\Sigma$
for the Riemannian area measure, identified with the area form when an orientation is fixed. Ambient curvature tensors are evaluated along
$\iota$, and tangent vectors to $\Sigma$ are identified with their
images under $d\iota$. The notation $\operatorname{Ind}(\Sigma)$ refers
to the Morse index of the immersion on smooth sections of its normal
bundle.

\begin{proposition}[Genus control for a curvature potential]
\label{Proposition: Genus control for curvature potentials}
    Let $(\Sigma,h)$ be a closed connected oriented Riemannian surface
    of genus $\gamma$, with Gauss curvature $K_\Sigma$. Suppose that
    \begin{align}\label{Equation: Curvature potential decomposition}
        V=-2K_\Sigma+b,
        \qquad b\in C^\infty(\Sigma,\mathbb{R}),
        \qquad I:=\operatorname{Ind}(Q_V).
    \end{align}
    If $I\ge1$, then
    \begin{align}\label{Equation: Exact curvature potential estimate}
        8\pi(\gamma-1)
        \le64\pi I+\int_\Sigma V_-\,\omega_\Sigma
        -3\int_\Sigma b\,\omega_\Sigma.
    \end{align}
    If $I=0$, then
    \begin{align}\label{Equation: Stable curvature potential estimate}
        8\pi(\gamma-1)\le-\int_\Sigma b\,\omega_\Sigma.
    \end{align}
    In particular, if $b\ge-\beta$ and $V_-\le\eta$ for constants
    $\beta,\eta\ge0$, then
    \begin{align}\label{Equation: Uniform curvature potential estimate}
        \gamma\le8I+1+
        \frac{3\beta+\eta}{8\pi}\operatorname{Area}_h(\Sigma).
    \end{align}
\end{proposition}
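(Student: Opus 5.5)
The plan is to combine Theorem~\ref{Theorem: Scalar spectral inequality} with the Gauss--Bonnet theorem, choosing $N$ proportional to $I$. The starting identity is Gauss--Bonnet, $\int_\Sigma K_\Sigma\,\omega_\Sigma=2\pi(2-2\gamma)$. Together with \eqref{Equation: Curvature potential decomposition} it gives
\begin{align*}
    \int_\Sigma V\,\omega_\Sigma=8\pi(\gamma-1)+\int_\Sigma b\,\omega_\Sigma .
\end{align*}
Every estimate below comes from bounding $\int_\Sigma V\,\omega_\Sigma$ from above and substituting this identity.

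\emph{Unstable case $I\ge1$.} Apply Theorem~\ref{Theorem: Scalar spectral inequality} with $N:=4I$; this is admissible because $N\ge4\ge2$. The theorem gives
\begin{align*}
    \int_\Sigma V\,\omega_\Sigma\le 4\pi(\gamma+4I-1)+\frac14\int_\Sigma V_+\,\omega_\Sigma .
\end{align*}
Write $V_+=V+V_-$ and move $\tfrac14\int_\Sigma V\,\omega_\Sigma$ to the left-hand side. Multiplying by $4$ then gives
\begin{align*}
    3\int_\Sigma V\,\omega_\Sigma\le 16\pi(\gamma-1)+64\pi I+\int_\Sigma V_-\,\omega_\Sigma .
\end{align*}
Substituting $3\int_\Sigma V\,\omega_\Sigma=24\pi(\gamma-1)+3\int_\Sigma b\,\omega_\Sigma$ and cancelling $16\pi(\gamma-1)$ yields exactly \eqref{Equation: Exact curvature potential estimate}. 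The choice $N=4I$ is the key step. It is the value that makes the coefficient of $\int_\Sigma V$ after rearrangement equal to $3$, so that a positive multiple $8\pi(\gamma-1)$ of the genus term survives. Any $N$ of order $I$ would work, with different constants.

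\emph{Stable case $I=0$.} Here $Q_V\ge0$ on all functions. Testing with $f\equiv1$ gives $\int_\Sigma V\,\omega_\Sigma\le0$, and the Gauss--Bonnet identity then gives \eqref{Equation: Stable curvature potential estimate} immediately.

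\emph{Uniform bound.} Assume $b\ge-\beta$ and $V_-\le\eta$.
\begin{itemize}
    \item If $I\ge1$, then $\int_\Sigma V_-\,\omega_\Sigma-3\int_\Sigma b\,\omega_\Sigma\le(\eta+3\beta)\operatorname{Area}_h(\Sigma)$. Dividing \eqref{Equation: Exact curvature potential estimate} by $8\pi$ gives $\gamma\le 8I+1+\frac{3\beta+\eta}{8\pi}\operatorname{Area}_h(\Sigma)$.
    \item If $I=0$, then $-\int_\Sigma b\,\omega_\Sigma\le\beta\operatorname{Area}_h(\Sigma)\le(3\beta+\eta)\operatorname{Area}_h(\Sigma)$. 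This again gives \eqref{Equation: Uniform curvature potential estimate}, since $8I=0$.
\end{itemize}

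All steps are short. The only point needing care is the rearrangement with $V_+=V+V_-$ and the choice $N=4I$. The proof relies entirely on Theorem~\ref{Theorem: Scalar spectral inequality}, which is already established.
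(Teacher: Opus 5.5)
Your proof is correct and follows the paper's argument step for step. You use Gauss--Bonnet to get $\int_\Sigma V\,\omega_\Sigma=8\pi(\gamma-1)+\int_\Sigma b\,\omega_\Sigma$, then the scalar spectral inequality with $N=4I$ and $V_+=V+V_-$ for the unstable case, then the constant test function for the stable case, and the same pointwise bounds for the uniform estimate.

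One small correction to your side remark: not every $N$ of order $I$ works. For general $N$, the genus term appears with coefficient $4\pi(N-2I)$, so you need $N>2I$. The choice $N=4I$ is the one minimizing $N^2/(N-2I)$; see Remark~\ref{Remark: Choice of projective dimension}.
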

\begin{proof}
    Set
    \begin{align*}
        T:=\int_\Sigma V\,\omega_\Sigma,
        \qquad B:=\int_\Sigma b\,\omega_\Sigma,
        \qquad D:=\int_\Sigma V_-\,\omega_\Sigma.
    \end{align*}
    By Gauss--Bonnet,
    \begin{align}\label{Equation: Integrated curvature potential}
        T=-2\int_\Sigma K_\Sigma\,\omega_\Sigma+B
        =8\pi(\gamma-1)+B.
    \end{align}

    Suppose first that $I\ge1$. We may then apply
    Theorem~\ref{Theorem: Scalar spectral inequality} with $N=4I$.
    Since $V_+=V+V_-$, this gives
    \begin{align*}
        T\le4\pi(\gamma+4I-1)+\frac14(T+D),
    \end{align*}
    and hence
    \begin{align*}
        3T\le16\pi(\gamma+4I-1)+D.
    \end{align*}
    Substituting \eqref{Equation: Integrated curvature potential},
    we obtain
    \begin{align*}
        24\pi(\gamma-1)+3B
        \le16\pi(\gamma-1)+64\pi I+D.
    \end{align*}
    Rearranging proves
    \eqref{Equation: Exact curvature potential estimate}.

    If $I=0$, the quadratic form $Q_V$ is nonnegative. Testing it
    with the constant function gives
    \begin{align*}
        0\le Q_V(1)=-\int_\Sigma V\,\omega_\Sigma=-T.
    \end{align*}
    Equation~\eqref{Equation: Integrated curvature potential} then
    yields \eqref{Equation: Stable curvature potential estimate}.

    Finally, assume that $b\ge-\beta$ and $V_-\le\eta$. If $I\ge1$,
    \eqref{Equation: Exact curvature potential estimate} implies
    \begin{align*}
        8\pi(\gamma-1)
        \le64\pi I+(3\beta+\eta)\operatorname{Area}_h(\Sigma),
    \end{align*}
    which is \eqref{Equation: Uniform curvature potential estimate}.
    If $I=0$, \eqref{Equation: Stable curvature potential estimate}
    gives the stronger bound
    \begin{align*}
        \gamma\le1+\frac{\beta}{8\pi}
        \operatorname{Area}_h(\Sigma).
    \end{align*}
    Since $\beta,\eta\ge0$, this also implies
    \eqref{Equation: Uniform curvature potential estimate}.
\end{proof}
\begin{remark}[The choice of projective dimension]
\label{Remark: Choice of projective dimension}
    The choice $N=4I$ in the preceding proof minimizes the
    contribution depending only on the index. Indeed, suppose
    that $I\ge1$, and write
    \begin{align*}
        B:=\int_\Sigma b\,\omega_\Sigma,
        \qquad
        D:=\int_\Sigma V_-\,\omega_\Sigma.
    \end{align*}
    For every integer $N>2I$, the scalar spectral inequality,
    together with
    $\int_\Sigma V\,\omega_\Sigma=8\pi(\gamma-1)+B$,
    gives
    \begin{align*}
        4\pi(N-2I)(\gamma-1)
        \le4\pi N^2+ID-(N-I)B.
    \end{align*}
    Dividing by $4\pi(N-2I)$, we obtain
    \begin{align*}
        \gamma-1
        \le\frac{N^2}{N-2I}
        +\frac{ID-(N-I)B}{4\pi(N-2I)}.
    \end{align*}
    The first term on the right is minimized at $N=4I$, since
    \begin{align*}
        \frac{N^2}{N-2I}-8I
        =\frac{(N-4I)^2}{N-2I}\ge0.
    \end{align*}
    This explains the coefficient $8$ in the resulting genus
    estimate. It does not establish the optimality of that
    coefficient in the geometric inequalities.
\end{remark}
\begin{lemma}[The Jacobi potential and ambient curvature]
\label{Lemma: Jacobi potential and ambient curvature}
    Let $\iota:\Sigma\to(M^3,\bar g)$ be a minimal immersion of a
    closed surface, and let $K_\Sigma$ and $A_\Sigma$ denote its
    intrinsic Gauss curvature and second fundamental form. For any
    locally defined unit normal $\nu$, set
    \begin{align}\label{Equation: Scalar Jacobi potential}
        V_\Sigma:=|A_\Sigma|^2+
        \operatorname{Ric}_M(\nu,\nu).
    \end{align}
    This defines a smooth function on $\Sigma$, independently of
    the local choices of $\nu$, and
    \begin{align}
        V_\Sigma&=-2K_\Sigma+b_\Sigma,
        \label{Equation: Jacobi curvature decomposition}\\
        b_\Sigma
        &:=2\operatorname{sec}_M(T\Sigma)
        +\operatorname{Ric}_M(\nu,\nu)\notag\\
        &=\operatorname{Ric}_M(e_1,e_1)
        +\operatorname{Ric}_M(e_2,e_2),
        \label{Equation: Ambient tangential Ricci trace}
    \end{align}
    where $e_1,e_2$ is any local orthonormal tangent frame.
    If all ambient sectional curvatures along the immersion are at
    least $-\kappa$, for some $\kappa\ge0$, then, for every $c\ge0$,
    \begin{align}\label{Equation: Shifted Jacobi curvature bounds}
        b_\Sigma-c\ge-4\kappa-c,
        \qquad (V_\Sigma-c)_-\le2\kappa+c.
    \end{align}
    If the immersion is two-sided, then
    \begin{align}\label{Equation: Scalar and normal index agreement}
        I_{V_\Sigma}=\operatorname{Ind}(\Sigma).
    \end{align}
\end{lemma}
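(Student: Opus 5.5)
The plan is to verify the four claims in order: well-definedness of $V_\Sigma$, the Gauss-equation decomposition, the curvature bounds, and the index agreement. \emph{Well-definedness.} The two local unit normals differ by a sign, $\nu\mapsto-\nu$. Under this change $A_\Sigma$ changes sign, so $|A_\Sigma|^2$ is unchanged, and $\operatorname{Ric}_M(\nu,\nu)$ is quadratic in $\nu$. Hence $V_\Sigma$ is a globally defined smooth function, even when the normal bundle is nontrivial.

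\emph{Decomposition.} Fix a local orthonormal tangent frame $e_1,e_2$. The Gauss equation gives
\begin{align*}
    K_\Sigma=\operatorname{sec}_M(T\Sigma)+\det A_\Sigma .
\end{align*}
Minimality means $\operatorname{tr}A_\Sigma=0$, so $\det A_\Sigma=-\tfrac12|A_\Sigma|^2$. Therefore $|A_\Sigma|^2=-2K_\Sigma+2\operatorname{sec}_M(T\Sigma)$, and adding $\operatorname{Ric}_M(\nu,\nu)$ yields \eqref{Equation: Jacobi curvature decomposition} with $b_\Sigma=2\operatorname{sec}_M(T\Sigma)+\operatorname{Ric}_M(\nu,\nu)$. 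For the second expression in \eqref{Equation: Ambient tangential Ricci trace}, expand in the frame $(e_1,e_2,\nu)$:
\begin{align*}
    \operatorname{Ric}_M(e_1,e_1)&=\operatorname{sec}(e_1,e_2)+\operatorname{sec}(e_1,\nu),\\
    \operatorname{Ric}_M(e_2,e_2)&=\operatorname{sec}(e_1,e_2)+\operatorname{sec}(e_2,\nu),\\
    \operatorname{Ric}_M(\nu,\nu)&=\operatorname{sec}(e_1,\nu)+\operatorname{sec}(e_2,\nu).
\end{align*}
Summing the first two lines and comparing with the third gives $\operatorname{Ric}_M(e_1,e_1)+\operatorname{Ric}_M(e_2,e_2)=2\operatorname{sec}_M(T\Sigma)+\operatorname{Ric}_M(\nu,\nu)$.

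\emph{Curvature bounds.} If all sectional curvatures along the immersion are at least $-\kappa$, then each $\operatorname{Ric}_M(e_i,e_i)$ is a sum of two sectional curvatures and so is $\ge-2\kappa$. Hence $b_\Sigma\ge-4\kappa$, which gives the first bound after subtracting $c$. For the second, $V_\Sigma\ge\operatorname{Ric}_M(\nu,\nu)\ge-2\kappa$ because $|A_\Sigma|^2\ge0$. Thus $V_\Sigma-c\ge-2\kappa-c$, and since $2\kappa+c\ge0$ this gives $(V_\Sigma-c)_-\le2\kappa+c$.

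\emph{Index agreement.} When the immersion is two-sided, choose a global unit normal $\nu$. The map $f\mapsto f\nu$ is a linear isomorphism from $C^\infty(\Sigma,\mathbb{R})$ onto smooth normal sections. The second variation formula recalled in the introduction shows that $\delta^2\operatorname{Area}_\Sigma(f\nu,f\nu)=Q_{V_\Sigma}(f)$. The two quadratic forms correspond under this isomorphism, so their maximal negative-definite subspaces have equal dimension, giving \eqref{Equation: Scalar and normal index agreement}.

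I expect no real obstacle. The only points needing care are the sign convention $\det A_\Sigma=-\tfrac12|A_\Sigma|^2$ in the Gauss equation and the appeal to the second variation formula in the two-sided case.
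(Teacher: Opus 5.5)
Your proposal is correct and follows essentially the same route as the paper: sign-invariance under $\nu\mapsto-\nu$ for well-definedness, the Gauss equation with $\lambda_1+\lambda_2=0$, the three-term expansion of Ricci in the frame $(e_1,e_2,\nu)$, and the identification $f\mapsto f\nu$ for the index. The only difference is cosmetic: you quote the scalar second-variation formula directly, whereas the paper starts from the normal-bundle form $Q_\Sigma^\perp(X)=\int_\Sigma\bigl(|\nabla^\perp X|^2-V_\Sigma|X|^2\bigr)\,\omega_\Sigma$ and uses $\nabla^\perp\nu=0$ to get $Q_\Sigma^\perp(f\nu)=Q_{V_\Sigma}(f)$.
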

\begin{proof}
    Any two local unit normal fields differ by a sign on their common
    domain. Since $\operatorname{Ric}_M(\nu,\nu)$ is unchanged by
    replacing $\nu$ with $-\nu$, the local definitions of $V_\Sigma$
    agree on overlaps and give a smooth function on $\Sigma$.

    Let $\lambda_1,\lambda_2$ be the principal curvatures at a point.
    Minimality gives $\lambda_1+\lambda_2=0$, so the Gauss equation
    yields
    \begin{align*}
        K_\Sigma
        &=\operatorname{sec}_M(T\Sigma)+\lambda_1\lambda_2\\
        &=\operatorname{sec}_M(T\Sigma)
        -\frac12(\lambda_1^2+\lambda_2^2)
        =\operatorname{sec}_M(T\Sigma)-\frac12|A_\Sigma|^2.
    \end{align*}
    Rearranging and adding $\operatorname{Ric}_M(\nu,\nu)$ proves
    \eqref{Equation: Jacobi curvature decomposition} with
    $b_\Sigma=2\operatorname{sec}_M(T\Sigma)
    +\operatorname{Ric}_M(\nu,\nu)$.

    For an orthonormal frame $e_1,e_2,\nu$, the three-dimensional
    curvature identities give
    \begin{align*}
        \operatorname{Ric}_M(e_1,e_1)
        &=\operatorname{sec}_M(e_1\wedge e_2)
        +\operatorname{sec}_M(e_1\wedge\nu),\\
        \operatorname{Ric}_M(e_2,e_2)
        &=\operatorname{sec}_M(e_1\wedge e_2)
        +\operatorname{sec}_M(e_2\wedge\nu),\\
        \operatorname{Ric}_M(\nu,\nu)
        &=\operatorname{sec}_M(e_1\wedge\nu)
        +\operatorname{sec}_M(e_2\wedge\nu).
    \end{align*}
    Here $\operatorname{sec}_M(u\wedge v)$ denotes the sectional
    curvature of the plane spanned by the orthonormal vectors $u,v$.
    Adding the first two identities and using the third proves
    \eqref{Equation: Ambient tangential Ricci trace}.

    If $\operatorname{sec}_M\ge-\kappa$ along the immersion, these
    formulas imply $\operatorname{Ric}_M(\nu,\nu)\ge-2\kappa$.
    Consequently,
    \begin{align*}
        V_\Sigma\ge-2\kappa,
        \qquad b_\Sigma\ge-4\kappa.
    \end{align*}
    Thus $b_\Sigma-c\ge-4\kappa-c$ and
    $V_\Sigma-c\ge-2\kappa-c$, proving
    \eqref{Equation: Shifted Jacobi curvature bounds}.

    It remains to compare the indices in the two-sided case.
    Let $N\Sigma\subset\iota^*TM$ denote the normal line bundle,
    endowed with its induced connection $\nabla^\perp$. The second
    variation of area on a smooth normal section $X$ is
    \begin{align}\label{Equation: Normal Jacobi quadratic form}
        Q_\Sigma^\perp(X)
        &=\int_\Sigma\bigl(
        |\nabla^\perp X|^2-|A_\Sigma|^2|X|^2
        -\operatorname{Ric}_M(X,X)\bigr)\,\omega_\Sigma\notag\\
        &=\int_\Sigma\bigl(
        |\nabla^\perp X|^2-V_\Sigma|X|^2\bigr)\,\omega_\Sigma;
    \end{align}
    see, for example, \cite[Section 2.1]{CKM17} for this expression.
    The second equality follows because the normal bundle has rank
    one, so $\operatorname{Ric}_M(X,X)
    =\operatorname{Ric}_M(\nu,\nu)|X|^2$ pointwise.

    When the immersion is two-sided, choose a global smooth unit
    normal $\nu$. Since $\nabla^\perp$ is a metric connection on a
    line bundle, $\nabla^\perp\nu=0$. Indeed, each covariant derivative
    of $\nu$ is normal and orthogonal to $\nu$, and hence vanishes.
    Every smooth normal section is uniquely of the form $X=f\nu$, and
    \begin{align*}
        \nabla^\perp(f\nu)=df\otimes\nu,
        \qquad Q_\Sigma^\perp(f\nu)=Q_{V_\Sigma}(f).
    \end{align*}
    Therefore, $f\mapsto f\nu$ is a linear isomorphism preserving
    the quadratic forms. Their negative indices agree, which proves
    \eqref{Equation: Scalar and normal index agreement}.
\end{proof}

\begin{corollary}[A shifted scalar Jacobi operator]
\label{Corollary: Shifted scalar Jacobi estimate}
    Let $\iota:\Sigma\to(M^3,\bar g)$ be a closed connected orientable
    minimal immersion. Suppose that all ambient sectional curvatures
    along the immersion are at least $-\kappa$, where $\kappa\ge0$.
    For $c\ge0$, let
    \begin{align*}
        J_c:=\operatorname{Ind}(Q_{V_\Sigma-c})
    \end{align*}
    be the scalar index computed on real-valued functions on $\Sigma$.
    Then
    \begin{align}\label{Equation: Shifted scalar Jacobi estimate}
        \gamma(\Sigma)
        \le8J_c+1+
        \frac{14\kappa+4c}{8\pi}\operatorname{Area}(\Sigma).
    \end{align}
    No two-sidedness assumption is required.
\end{corollary}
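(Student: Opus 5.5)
The plan is to apply Proposition~\ref{Proposition: Genus control for curvature potentials} directly to the shifted potential $V:=V_\Sigma-c$ on $\Sigma$ with the induced metric $h=\iota^*\bar g$. We fix an orientation of $\Sigma$, which is possible since $\Sigma$ is orientable. Then $(\Sigma,h)$ becomes a closed connected oriented Riemannian surface of genus $\gamma(\Sigma)$.

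By Lemma~\ref{Lemma: Jacobi potential and ambient curvature}, the function $V_\Sigma$ is globally well defined and smooth even when no global unit normal exists, because $\operatorname{Ric}_M(\nu,\nu)$ is insensitive to the sign of $\nu$. This is the only point where two-sidedness might seem to enter, and it is exactly why no two-sidedness assumption is needed. The index $J_c$ is defined purely as the scalar index of $Q_{V_\Sigma-c}$ on real functions, so it is never compared with the normal index here. Using \eqref{Equation: Jacobi curvature decomposition}, we write
\begin{align*}
    V_\Sigma-c=-2K_\Sigma+(b_\Sigma-c),
\end{align*}
which has the form \eqref{Equation: Curvature potential decomposition} with $b:=b_\Sigma-c$ and $I=J_c$.

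The shifted bounds \eqref{Equation: Shifted Jacobi curvature bounds} give
\begin{align*}
    b\ge-\beta,\quad \beta:=4\kappa+c,
    \qquad
    (V_\Sigma-c)_-\le\eta,\quad \eta:=2\kappa+c,
\end{align*}
and both constants are nonnegative. Plugging these into \eqref{Equation: Uniform curvature potential estimate} yields
\begin{align*}
    \gamma(\Sigma)\le8J_c+1+\frac{3\beta+\eta}{8\pi}\operatorname{Area}(\Sigma).
\end{align*}
Since $3\beta+\eta=12\kappa+3c+2\kappa+c=14\kappa+4c$, this is exactly \eqref{Equation: Shifted scalar Jacobi estimate}. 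The uniform estimate already covers the case $J_c=0$, so no separate case analysis is needed.

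There is no real obstacle. The only care required is checking that the potential is globally defined without a normal, and that the arithmetic of the constants matches.
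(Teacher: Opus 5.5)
Your proposal is correct and matches the paper's proof: fix an orientation, write $V_\Sigma-c=-2K_\Sigma+(b_\Sigma-c)$, and apply \eqref{Equation: Uniform curvature potential estimate} with $\beta=4\kappa+c$ and $\eta=2\kappa+c$ taken from \eqref{Equation: Shifted Jacobi curvature bounds}, so that $3\beta+\eta=14\kappa+4c$. Your remarks that $V_\Sigma$ is globally defined without a global normal, and that the uniform estimate already covers $J_c=0$, are also the reasons the paper needs neither two-sidedness nor a separate case split.
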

\begin{proof}
    Fix an orientation on $\Sigma$. By
    Lemma~\ref{Lemma: Jacobi potential and ambient curvature},
    \begin{align*}
        V_\Sigma-c=-2K_\Sigma+(b_\Sigma-c),
    \end{align*}
    with
    \begin{align*}
        b_\Sigma-c\ge-(4\kappa+c),
        \qquad (V_\Sigma-c)_-\le2\kappa+c.
    \end{align*}
    We may therefore apply
    Proposition~\ref{Proposition: Genus control for curvature potentials}
    to $V_\Sigma-c$, with $\beta=4\kappa+c$ and $\eta=2\kappa+c$.
    Equation~\eqref{Equation: Uniform curvature potential estimate}
    gives
    \begin{align*}
        \gamma(\Sigma)
        &\le8J_c+1+
        \frac{3(4\kappa+c)+(2\kappa+c)}{8\pi}
        \operatorname{Area}(\Sigma)\\
        &=8J_c+1+
        \frac{14\kappa+4c}{8\pi}\operatorname{Area}(\Sigma),
    \end{align*}
    as claimed.
\end{proof}

\begin{proof}[Proof of Theorem~\ref{Theorem: Explicit two sided estimate}]
    Since the immersion is two-sided,
    \eqref{Equation: Scalar and normal index agreement} gives
    $I_{V_\Sigma}=\operatorname{Ind}(\Sigma)$. Taking $c=0$ in
    Corollary~\ref{Corollary: Shifted scalar Jacobi estimate}, we obtain
    \begin{align*}
        \gamma(\Sigma)
        &\le8\operatorname{Ind}(\Sigma)+1+
        \frac{14\kappa}{8\pi}\operatorname{Area}(\Sigma)\\
        &=8\operatorname{Ind}(\Sigma)+1+
        \frac{7\kappa}{4\pi}\operatorname{Area}(\Sigma).
    \end{align*}

    If $\operatorname{Ind}(\Sigma)=0$, then $I_{V_\Sigma}=0$.
    Applying \eqref{Equation: Stable curvature potential estimate}
    and the bound $b_\Sigma\ge-4\kappa$, we find
    \begin{align*}
        8\pi\bigl(\gamma(\Sigma)-1\bigr)
        \le-\int_\Sigma b_\Sigma\,\omega_\Sigma
        \le4\kappa\operatorname{Area}(\Sigma).
    \end{align*}
    Dividing by $8\pi$ proves the stronger stable estimate
    \begin{align*}
        \gamma(\Sigma)
        \le1+\frac{\kappa}{2\pi}\operatorname{Area}(\Sigma).
    \end{align*}
\end{proof}

The expression of $b_\Sigma$ as the tangential trace of the ambient
Ricci tensor also gives an estimate involving only the index when
the ambient Ricci curvature is nonnegative.

\begin{corollary}[Nonnegative and positive ambient Ricci curvature]
\label{Corollary: Ricci curvature and genus}
    Let $\iota:\Sigma\to(M^3,\bar g)$ be a closed connected orientable
    two-sided minimal immersion. If $\operatorname{Ric}_M\ge0$ along
    the immersion, then
    \begin{align}\label{Equation: Nonnegative Ricci genus estimate}
        \gamma(\Sigma)\le8\operatorname{Ind}(\Sigma)+1.
    \end{align}
    If $\operatorname{Ric}_M>0$ along the immersion, then
    \begin{align}\label{Equation: Positive Ricci genus estimate}
        \gamma(\Sigma)\le8\operatorname{Ind}(\Sigma).
    \end{align}
\end{corollary}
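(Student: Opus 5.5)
The plan is to apply Proposition~\ref{Proposition: Genus control for curvature potentials} directly to $V_\Sigma=-2K_\Sigma+b_\Sigma$, using the decomposition of Lemma~\ref{Lemma: Jacobi potential and ambient curvature}. Two facts drive the argument. First, $I_{V_\Sigma}=\operatorname{Ind}(\Sigma)$ by two-sidedness. Second, $b_\Sigma=\operatorname{Ric}_M(e_1,e_1)+\operatorname{Ric}_M(e_2,e_2)$ is the tangential Ricci trace, so $\operatorname{Ric}_M\ge0$ gives $b_\Sigma\ge0$ and $V_\Sigma=|A_\Sigma|^2+\operatorname{Ric}_M(\nu,\nu)\ge0$. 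Hence $(V_\Sigma)_-\equiv0$ and $\int_\Sigma b_\Sigma\,\omega_\Sigma\ge0$.

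For the nonnegative case, suppose first that $\operatorname{Ind}(\Sigma)\ge1$. Then \eqref{Equation: Exact curvature potential estimate} gives
\begin{align*}
8\pi(\gamma-1)\le64\pi\operatorname{Ind}(\Sigma)-3\int_\Sigma b_\Sigma\,\omega_\Sigma\le64\pi\operatorname{Ind}(\Sigma),
\end{align*}
which is \eqref{Equation: Nonnegative Ricci genus estimate}. If the index is zero, \eqref{Equation: Stable curvature potential estimate} gives $8\pi(\gamma-1)\le-\int_\Sigma b_\Sigma\,\omega_\Sigma\le0$, so $\gamma\le1$ as required.

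For the positive case, $\operatorname{Ric}_M>0$ along the compact image gives $b_\Sigma>0$ everywhere. Therefore $\int_\Sigma b_\Sigma\,\omega_\Sigma>0$, and both inequalities above become strict. If $\operatorname{Ind}(\Sigma)\ge1$, we get $8\pi(\gamma-1)<64\pi\operatorname{Ind}(\Sigma)$, that is $\gamma-1<8\operatorname{Ind}(\Sigma)$. Since both sides are integers, this forces $\gamma\le8\operatorname{Ind}(\Sigma)$. In the stable case the strict inequality $8\pi(\gamma-1)<0$ gives $\gamma=0$, which is consistent with \eqref{Equation: Positive Ricci genus estimate}. Alternatively, testing $Q_{V_\Sigma}$ with constants shows $Q_{V_\Sigma}(1)=-\int_\Sigma V_\Sigma\,\omega_\Sigma<0$, so the index is at least one and the stable case cannot occur.

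The only point needing care is this integrality step: turning the strict inequality into the sharper bound without the $+1$. I expect no real obstacle beyond checking that strict positivity of $b_\Sigma$ survives integration, which holds because $\Sigma$ is closed and has positive area.
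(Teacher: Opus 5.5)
Your proposal is correct and follows essentially the same route as the paper. You apply Proposition~\ref{Proposition: Genus control for curvature potentials} to $V_\Sigma=-2K_\Sigma+b_\Sigma$ with $I_{V_\Sigma}=\operatorname{Ind}(\Sigma)$, use $V_\Sigma\ge0$ and $b_\Sigma\ge0$ (strictly positive when $\operatorname{Ric}_M>0$), and finish by integrality; the only minor difference is that the paper handles the stable case under $\operatorname{Ric}_M>0$ solely through $Q_{V_\Sigma}(1)<0$, whereas you also give the direct argument yielding $\gamma=0$, and both are valid.
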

\begin{proof}
    Write $I:=\operatorname{Ind}(\Sigma)=I_{V_\Sigma}$. By
    Lemma~\ref{Lemma: Jacobi potential and ambient curvature}, the
    assumption $\operatorname{Ric}_M\ge0$ implies
    \begin{align*}
        V_\Sigma\ge0,
        \qquad
        b_\Sigma=\operatorname{Ric}_M(e_1,e_1)
        +\operatorname{Ric}_M(e_2,e_2)\ge0.
    \end{align*}
    If $I\ge1$, \eqref{Equation: Exact curvature potential estimate}
    therefore gives
    \begin{align*}
        8\pi\bigl(\gamma(\Sigma)-1\bigr)
        \le64\pi I-3\int_\Sigma b_\Sigma\,\omega_\Sigma
        \le64\pi I.
    \end{align*}
    If $I=0$, \eqref{Equation: Stable curvature potential estimate}
    instead yields
    \begin{align*}
        8\pi\bigl(\gamma(\Sigma)-1\bigr)
        \le-\int_\Sigma b_\Sigma\,\omega_\Sigma\le0.
    \end{align*}
    These two cases prove
    \eqref{Equation: Nonnegative Ricci genus estimate}.

    Suppose now that $\operatorname{Ric}_M>0$ along the immersion.
    Then $V_\Sigma>0$ and $b_\Sigma>0$ everywhere. In particular,
    \begin{align*}
        Q_{V_\Sigma}(1)
        =-\int_\Sigma V_\Sigma\,\omega_\Sigma<0,
    \end{align*}
    so $I\ge1$. Applying
    \eqref{Equation: Exact curvature potential estimate} once more,
    we obtain the strict inequality
    \begin{align*}
        8\pi\bigl(\gamma(\Sigma)-1\bigr)
        \le64\pi I-3\int_\Sigma b_\Sigma\,\omega_\Sigma
        <64\pi I.
    \end{align*}
    Thus $\gamma(\Sigma)<8I+1$. Since $\gamma(\Sigma)$ and $I$ are
    integers, this implies
    \eqref{Equation: Positive Ricci genus estimate}.
\end{proof}
\section{Localization and the general embedded case}
\label{Section: General embedded case}

In this section, $(M^3,\bar g)$ is a fixed closed Riemannian manifold.
We extend the estimate of the preceding section to closed embedded
minimal surfaces with arbitrary tangent and normal bundles. The argument
uses local normal sections to compare the normal Morse index with the
index of a shifted scalar operator on an orientable cover.

Choose $\kappa\ge0$ such that $\operatorname{sec}_M\ge-\kappa$.
Fix a finite cover of $M$ by coordinate balls $U_1,\ldots,U_m$ and a
smooth nonnegative partition of unity $\rho_1,\ldots,\rho_m$ with
$\operatorname{supp}\rho_j\Subset U_j$. Define
\begin{align*}
    \chi_j:=\frac{\rho_j}{\bigl(\sum_{\ell=1}^m\rho_\ell^2\bigr)^{1/2}}.
\end{align*}
The denominator is positive everywhere, so each $\chi_j$ is smooth and
compactly supported in $U_j$. We have
\begin{align}\label{Equation: Ambient localization data}
    \sum_{j=1}^m\chi_j^2=1,
    \qquad
    c_M:=\sup_M\sum_{j=1}^m|\nabla^M\chi_j|_{\bar g}^2<+\infty.
\end{align}
All these choices are fixed independently of the surface. When restricting
to a surface $\Sigma\subset M$, we continue to write $\chi_j$ for
$\chi_j|_\Sigma$. The area measures and the normal quadratic form
$Q_\Sigma^\perp$ are as in Section~\ref{Section: Curvature potentials}.

The following lemma follows from \cite[Lemma C.1]{CKM17} by passing to a coordinate chart and using the equivalence between orientability and two-sidedness for hypersurfaces in an orientable ambient manifold. We include a proof for completeness.

\begin{lemma}[Local orientability and two-sidedness]
\label{Lemma: Local orientability and two sidedness}
    Let $\Sigma\subset M$ be a closed smoothly embedded surface, and
    let $U\subset M$ be a coordinate ball. Every connected component
    of $\Sigma\cap U$ is orientable and two-sided in $U$.
\end{lemma}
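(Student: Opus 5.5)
The plan is to prove two-sidedness first and then deduce orientability from the orientability of $U$. Let $S$ be a connected component of $\Sigma\cap U$. It is a properly embedded surface in $U$, because $\Sigma$ is closed and embedded: $S$ is relatively closed in $U$ and has no boundary in $U$. Identify $U$ with an open ball $B\subset\mathbb{R}^3$ through the coordinate chart. The ball is orientable, and the ambient metric induces an orientation on $TU$. Hence, along $S$, orientability of $TS$ is equivalent to triviality of the normal line bundle $NS$: from $TU|_S\cong TS\oplus NS$ we get $w_1(TS)=w_1(NS)$, since $w_1(TU|_S)=0$. So it suffices to prove that $S$ is two-sided.

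To prove two-sidedness, I will show that $S$ separates $U$ in the mod-$2$ sense. The key tool is intersection theory with $\mathbb{Z}_2$ coefficients on a properly embedded hypersurface in a simply connected manifold. Suppose $S$ were one-sided. Then there is a closed loop $\sigma$ in $S$ along which a unit normal reverses sign. Push $\sigma$ off slightly along the local normal direction to get a closed curve $\tilde\sigma$ in $U$ that meets $S$ transversally in exactly one point. Since $U$ is a ball, it is simply connected, so $\tilde\sigma$ bounds a map of a disk, and its homotopy class is trivial. The mod-$2$ intersection number of a closed curve with a properly embedded closed hypersurface is a homotopy invariant. This invariance uses properness: a homotopy has compact image, and $S$ is closed in $U$. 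Applying this invariance, the intersection number of $\tilde\sigma$ with $S$ would have to equal that of a constant loop, namely $0$. This contradicts the single transverse intersection point. Equivalently, $H^1(U;\mathbb{Z}_2)=0$ forces the class $w_1(NS)$, which is the Poincaré dual restriction, to vanish.

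The main obstacle is making the homotopy invariance rigorous in the noncompact setting. I would proceed as follows. Perturb the null-homotopy $D^2\to U$ rel boundary so that it is transverse to $S$. Because $S$ is closed in $U$ and the disk is compact, the preimage of $S$ is a compact $1$-manifold with boundary. Its boundary points on $\partial D^2$ are exactly the preimages of $\tilde\sigma\cap S$. A compact $1$-manifold has an even number of boundary points, which contradicts there being exactly one.

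Having obtained a global unit normal on $S$, I combine it with the orientation of $U$ to orient $S$: declare $(e_1,e_2)$ positive when $(e_1,e_2,\nu)$ is positive in $U$. This gives both conclusions.
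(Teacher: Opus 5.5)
Your proposal is correct and follows essentially the same argument as the paper: push a normal-reversing loop off $S$ to a loop meeting $S$ transversally exactly once, extend it over a disk using simple connectivity of $U$, make the disk transverse to $S$ rel boundary, and use properness of $S$ in $U$ to obtain a compact $1$-manifold with an odd number of boundary points. Orientability then follows from two-sidedness and an orientation of $U$, exactly as in the paper; one small correction is that this orientation comes from the coordinate chart, not from the ambient metric.
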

\begin{proof}
    Since $\Sigma$ is closed in $M$, its intersection with $U$ is a
    properly embedded surface without boundary in $U$. The same is
    true of each connected component $S$ of $\Sigma\cap U$.

    Suppose that the normal line bundle of $S$ is nontrivial. Then
    there exists a smooth loop in $S$ along which a continuous unit
    normal returns with the opposite sign. A sufficiently small normal
    displacement of this loop gives a path in $U\setminus S$ joining
    the two sides of a normal segment at its base point. Closing this
    path by that normal segment and smoothing away from $S$, we obtain
    a smooth loop $\ell$ in $U$ meeting $S$ transversely at exactly
    one parameter value.

    Since $U$ is simply connected, $\ell$ extends to a smooth map
    $u:\overline{\mathbb{D}}\to U$. By relative transversality, we may
    arrange that $u$ is transverse to $S$ while preserving its boundary
    values. Proper embeddedness of $S$ implies that $u^{-1}(S)$ is
    compact. It is therefore a compact one-dimensional manifold whose
    boundary consists of the single point of $\ell^{-1}(S)$. This
    contradicts the fact that the boundary of a compact one-dimensional
    manifold has even cardinality.

    Hence $S$ is two-sided. Choosing an orientation of the coordinate
    ball $U$, a smooth unit normal on $S$ induces an orientation of
    $S$, proving the claim.
\end{proof}

\begin{proposition}[Localized comparison with a shifted scalar index]
\label{Proposition: Localized shifted index comparison}
    Let $\Sigma\subset M$ be a closed connected smoothly embedded
    minimal surface, and set $I:=\operatorname{Ind}(\Sigma)$.
    If $\Sigma$ is orientable, let $p:\widehat\Sigma\to\Sigma$ be
    the identity map. Otherwise, let $p$ be its connected orientation
    double cover. Set
    \begin{align*}
        q:=\deg p\in\{1,2\},
        \qquad \widehat h:=p^*h,
        \qquad \widehat V:=p^*V_\Sigma.
    \end{align*}
    Then
    \begin{align}\label{Equation: Localized shifted index comparison}
        \operatorname{Ind}(Q_{\widehat V-c_M})\le qmI.
    \end{align}
    More precisely, there exist real linear maps
    \begin{align*}
        T_{j,a}:C^\infty(\widehat\Sigma,\mathbb{R})
        \rightarrow\Gamma^\infty(N\Sigma),
        \qquad 1\le j\le m,\quad 1\le a\le q,
    \end{align*}
    such that, for every $f\in C^\infty(\widehat\Sigma,\mathbb{R})$,
    \begin{align}\label{Equation: Normal localization identity}
        \sum_{j=1}^m\sum_{a=1}^q Q_\Sigma^\perp(T_{j,a}f)
        &=Q_{\widehat V}(f)
        +\int_{\widehat\Sigma}
        p^*\!\left(\sum_{j=1}^m|\nabla^\Sigma\chi_j|^2\right)
        f^2\,\omega_{\widehat\Sigma}\notag\\
        &\le Q_{\widehat V-c_M}(f).
    \end{align}
\end{proposition}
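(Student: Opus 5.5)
The plan is to build the maps $T_{j,a}$ explicitly from local unit normals and local sheets of the cover $p$, verify the identity \eqref{Equation: Normal localization identity} by a direct computation using $\sum_j\chi_j^2=1$, and then deduce \eqref{Equation: Localized shifted index comparison} by a standard dimension count.

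\emph{Construction.} Fix $j$. By Lemma~\ref{Lemma: Local orientability and two sidedness}, every component $S$ of $\Sigma\cap U_j$ is orientable and two-sided. So we may choose a smooth unit normal $\nu_j$ on $\Sigma\cap U_j$, one choice on each component. Since $S$ is orientable, the orientation double cover is trivial over $S$. Hence $p^{-1}(\Sigma\cap U_j)$ splits into $q$ sheets, and we obtain smooth local inverses $\sigma_{j,a}:\Sigma\cap U_j\to\widehat\Sigma$ for $1\le a\le q$, with $p\circ\sigma_{j,a}=\mathrm{id}$. The images of the $\sigma_{j,a}$ are disjoint and cover $p^{-1}(\Sigma\cap U_j)$; when $q=1$ there is just $\sigma_{j,1}=\mathrm{id}$. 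We then set
\begin{align*}
    T_{j,a}f:=\chi_j\,(f\circ\sigma_{j,a})\,\nu_j
    \quad\text{on }\Sigma\cap U_j,
\end{align*}
extended by zero. Since $\operatorname{supp}\chi_j\Subset U_j$ and $\Sigma$ is closed, $\operatorname{supp}(\chi_j|_\Sigma)$ is a compact subset of $\Sigma\cap U_j$. Therefore $T_{j,a}f$ is a smooth normal section, and $T_{j,a}$ is clearly real linear. I expect this bookkeeping to be the only delicate point: the sheets and normals need only be defined on $\Sigma\cap U_j$, so the possible nontriviality of $N\Sigma$ or $T\Sigma$ never enters.

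\emph{Identity.} As in the proof of Lemma~\ref{Lemma: Jacobi potential and ambient curvature}, we have $\nabla^\perp\nu_j=0$. Writing $f_{j,a}:=f\circ\sigma_{j,a}$, this gives
\begin{align*}
    Q_\Sigma^\perp(T_{j,a}f)=\int_\Sigma\bigl(|\nabla(\chi_jf_{j,a})|^2-V_\Sigma\chi_j^2f_{j,a}^2\bigr)\,\omega_\Sigma.
\end{align*}
Expanding the gradient gives $\chi_j^2|\nabla f_{j,a}|^2+f_{j,a}^2|\nabla^\Sigma\chi_j|^2+2\chi_jf_{j,a}\langle\nabla\chi_j,\nabla f_{j,a}\rangle$. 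We then sum over $a$ and $j$, and pull back via $p$. For fixed $j$, the sum over $a$ of $\int_\Sigma \chi_j^2(\cdots)(f\circ\sigma_{j,a})$ equals $\int_{\widehat\Sigma}p^*(\chi_j^2)(\cdots)(f)$, because $p$ is a local isometry and the sheets partition $p^{-1}(\Sigma\cap U_j)$. The cross terms become $\int_{\widehat\Sigma}f\langle p^*(\sum_j\chi_j\nabla\chi_j),\nabla f\rangle$, which vanishes since $\sum_j\chi_j\nabla\chi_j=\tfrac12\nabla\sum_j\chi_j^2=0$. The main terms combine through $\sum_j\chi_j^2=1$ into $Q_{\widehat V}(f)$, and the gradient terms give exactly the stated extra integral. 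Finally, $|\nabla^\Sigma\chi_j|\le|\nabla^M\chi_j|$ and \eqref{Equation: Ambient localization data} yield the pointwise bound by $c_M$. This gives the inequality with $Q_{\widehat V-c_M}(f)$.

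\emph{Index count.} Suppose $E\subset C^\infty(\widehat\Sigma,\mathbb{R})$ is a subspace on which $Q_{\widehat V-c_M}$ is negative definite, with $\dim E>qmI$. Consider $\mathcal{T}:=(T_{j,a})_{j,a}:E\to\Gamma^\infty(N\Sigma)^{qm}$. This map is injective: if all $T_{j,a}f=0$, then $f$ vanishes on every sheet over $\{\chi_j\neq0\}$, and these sets cover $\widehat\Sigma$. By \eqref{Equation: Normal localization identity}, the form $\bigoplus_{j,a}Q_\Sigma^\perp$ is negative definite on $\mathcal{T}(E)$. This direct sum of $qm$ copies of $Q_\Sigma^\perp$ has index exactly $qmI$, since the index of a direct sum of forms is the sum of the indices. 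This contradicts $\dim\mathcal{T}(E)=\dim E>qmI$. Hence \eqref{Equation: Localized shifted index comparison} follows. The index $I$ here is computed on normal sections, which is valid regardless of sidedness.
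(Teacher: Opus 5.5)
Your proposal is correct and takes essentially the same route as the paper. It uses the same local normals and inverse branches and the same expansion via $\sum_j\chi_j^2=1$ and $\sum_j\chi_j\nabla\chi_j=0$. Its index count is equivalent to the paper's injectivity of $f\mapsto(\Pi_-T_{j,a}f)_{j,a}$ into $\bigoplus_{j,a}E_-$, since additivity of the index under direct sums is proved by exactly that projection argument. Your separate covering check that $\mathcal{T}$ is injective is valid but unnecessary: $\mathcal{T}f=0$ already gives $0\le Q_{\widehat V-c_M}(f)$, which forces $f=0$ on $E$.
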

\begin{proof}
    \medskip
    \noindent
    \textbf{Step 1: construction of the local normal sections.}
    Set $D_j:=\Sigma\cap U_j$. By
    Lemma~\ref{Lemma: Local orientability and two sidedness}, we can
    choose a smooth unit normal $\nu_j$ on $D_j$. Moreover, the
    restriction of $p$ to $p^{-1}(D_j)$ is a trivial cover of degree
    $q$. For $q=1$ this is immediate. For $q=2$, it follows because
    each connected component of $D_j$ is orientable: choosing an
    orientation on that component distinguishes its two lifts to the
    orientation cover.

    We can therefore choose smooth inverse branches
    \begin{align*}
        \sigma_{j,a}:D_j\rightarrow\widehat\Sigma,
        \qquad p\circ\sigma_{j,a}=\operatorname{Id}_{D_j},
        \qquad a=1,\ldots,q,
    \end{align*}
    whose images are disjoint and exhaust $p^{-1}(D_j)$. The choices
    are made separately on the connected components of $D_j$, and
    together define $q$ smooth branches on all of $D_j$.

    Given $f\in C^\infty(\widehat\Sigma,\mathbb{R})$, write
    $f_{j,a}:=f\circ\sigma_{j,a}$ on $D_j$, and define
    \begin{align*}
        (T_{j,a}f)(x):=
        \begin{cases}
            \chi_j(x)f_{j,a}(x)\nu_j(x),&x\in D_j,\\
            0,&x\in\Sigma\setminus D_j.
        \end{cases}
    \end{align*}
    Since $\operatorname{supp}\chi_j\Subset U_j$, this extension by
    zero is a smooth global normal section. The resulting maps
    $T_{j,a}$ are real linear. In particular, there are exactly $q$
    maps for each chart, regardless of the number of connected
    components of $D_j$.

    \medskip
    \noindent
    \textbf{Step 2: the localization identity.}
    Set $\widehat\chi_j:=\chi_j\circ p$. A local unit section of the
    normal line bundle is parallel for the normal connection, so
    $\nabla^\perp\nu_j=0$. Using
    \eqref{Equation: Normal Jacobi quadratic form}, we obtain
    \begin{align*}
        Q_\Sigma^\perp(T_{j,a}f)
        =\int_{D_j}\bigl(
        |\nabla^\Sigma(\chi_jf_{j,a})|^2
        -V_\Sigma\chi_j^2f_{j,a}^2\bigr)\,\omega_\Sigma.
    \end{align*}
    Since $p:(\widehat\Sigma,\widehat h)\to(\Sigma,h)$ is a local
    isometry, changing variables on the $q$ inverse branches gives
    \begin{align}\label{Equation: Localization on one coordinate ball}
        \sum_{a=1}^q Q_\Sigma^\perp(T_{j,a}f)
        =\int_{\widehat\Sigma}\bigl(
        |\nabla^{\widehat\Sigma}(\widehat\chi_jf)|^2
        -\widehat V\widehat\chi_j^2f^2\bigr)\,
        \omega_{\widehat\Sigma}.
    \end{align}
    Here the integrand vanishes outside $p^{-1}(D_j)$ because
    $\widehat\chi_j$ is supported in that set.

    Pulling back \eqref{Equation: Ambient localization data} and
    differentiating, we find
    \begin{align*}
        \sum_{j=1}^m\widehat\chi_j^2=1,
        \qquad
        \sum_{j=1}^m\widehat\chi_j
        \nabla^{\widehat\Sigma}\widehat\chi_j=0.
    \end{align*}
    Expanding the gradient squares therefore yields
    \begin{align*}
        \sum_{j=1}^m
        |\nabla^{\widehat\Sigma}(\widehat\chi_jf)|^2
        &=\left(\sum_{j=1}^m\widehat\chi_j^2\right)
        |\nabla^{\widehat\Sigma}f|^2\\
        &\quad+2f\left\langle\nabla^{\widehat\Sigma}f,
        \sum_{j=1}^m\widehat\chi_j
        \nabla^{\widehat\Sigma}\widehat\chi_j\right\rangle\\
        &\quad+f^2\sum_{j=1}^m
        |\nabla^{\widehat\Sigma}\widehat\chi_j|^2\\
        &=|\nabla^{\widehat\Sigma}f|^2
        +f^2\sum_{j=1}^m
        |\nabla^{\widehat\Sigma}\widehat\chi_j|^2.
    \end{align*}
    Summing \eqref{Equation: Localization on one coordinate ball}
    over $j$ and using $\sum_j\widehat\chi_j^2=1$ in the potential
    term, we obtain
    \begin{align*}
        \sum_{j=1}^m\sum_{a=1}^q Q_\Sigma^\perp(T_{j,a}f)
        =Q_{\widehat V}(f)
        +\int_{\widehat\Sigma}
        \left(\sum_{j=1}^m
        |\nabla^{\widehat\Sigma}\widehat\chi_j|^2\right)
        f^2\,\omega_{\widehat\Sigma}.
    \end{align*}
    Local isometry gives
    \begin{align*}
        \sum_{j=1}^m
        |\nabla^{\widehat\Sigma}\widehat\chi_j|^2
        =p^*\!\left(\sum_{j=1}^m
        |\nabla^\Sigma\chi_j|^2\right).
    \end{align*}
    Furthermore, $\nabla^\Sigma\chi_j$ is the tangential projection
    of $\nabla^M\chi_j$. Hence
    \begin{align*}
        0\le\sum_{j=1}^m|\nabla^\Sigma\chi_j|^2
        \le\sum_{j=1}^m|\nabla^M\chi_j|^2\le c_M.
    \end{align*}
    Since
    \begin{align*}
        Q_{\widehat V-c_M}(f)
        =Q_{\widehat V}(f)
        +c_M\int_{\widehat\Sigma}f^2\,\omega_{\widehat\Sigma},
    \end{align*}
    this proves \eqref{Equation: Normal localization identity}.

    \medskip
    \noindent
    \textbf{Step 3: comparison of the indices.}
    Let $E_-$ be the negative spectral subspace of the self-adjoint
    operator associated with $Q_\Sigma^\perp$, and let
    $\Pi_-:L^2(N\Sigma)\to E_-$ be the orthogonal projection. Then
    $\dim E_-=I$. For every $X\in\Gamma^\infty(N\Sigma)$, the spectral
    theorem gives
    \begin{align*}
        \Pi_-X=0\quad\rightarrow\quad Q_\Sigma^\perp(X)\ge0.
    \end{align*}
    Let $F_-$ be the negative spectral subspace of the scalar operator
    $-\Delta_{\widehat h}-(\widehat V-c_M)$, and write
    $J:=\dim F_-=\operatorname{Ind}(Q_{\widehat V-c_M})$.
    Both negative spectral subspaces are finite-dimensional and consist
    of smooth sections or functions, respectively.

    Consider the real linear map
    \begin{align*}
        \mathcal{A}:F_-\rightarrow
        \bigoplus_{j=1}^m\bigoplus_{a=1}^q E_-,
        \qquad
        \mathcal{A}(f):=
        \bigl(\Pi_-T_{j,a}f\bigr)_{j,a}.
    \end{align*}
    We claim that $\mathcal{A}$ is injective. Indeed, if
    $f\in\ker\mathcal{A}$, then
    $Q_\Sigma^\perp(T_{j,a}f)\ge0$ for every $j,a$. If $f\ne0$,
    negative definiteness on $F_-$ and
    \eqref{Equation: Normal localization identity} would give
    \begin{align*}
        0\le\sum_{j=1}^m\sum_{a=1}^q
        Q_\Sigma^\perp(T_{j,a}f)
        \le Q_{\widehat V-c_M}(f)<0,
    \end{align*}
    a contradiction. Thus $\ker\mathcal{A}=\{0\}$, and comparing
    dimensions yields $J\le qmI$, as required.
\end{proof}

We recall the following standard consequence of the monotonicity
formula, which gives a positive lower bound for the area depending
only on the ambient manifold.

\begin{lemma}
\label{Lemma: Uniform minimum area}
    There exists $a_M>0$, depending only on $(M,\bar g)$, such that
    every nonempty connected closed smoothly embedded minimal surface
    $\Sigma\subset M$ satisfies
    \begin{align}\label{Equation: Uniform minimum area}
        \operatorname{Area}(\Sigma)\ge a_M.
    \end{align}
\end{lemma}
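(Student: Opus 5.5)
The plan is to use the monotonicity formula for minimal surfaces in a Riemannian manifold, with constants depending only on the ambient geometry. First fix $r_0>0$ smaller than the injectivity radius of $(M,\bar g)$, chosen so that on every geodesic ball $B_{r_0}(x)$ the metric is uniformly comparable to the Euclidean metric in normal coordinates. By compactness of $M$, such an $r_0$ exists and controls the metric together with its first derivatives. The almost-monotonicity formula for stationary varifolds in a Riemannian manifold then gives a constant $\Lambda=\Lambda(M,\bar g)\ge0$ such that, for every $x\in\Sigma$, the quantity
\begin{align*}
    r\mapsto e^{\Lambda r}\frac{\operatorname{Area}(\Sigma\cap B_r(x))}{\pi r^2}
\end{align*}
is nondecreasing on $(0,r_0)$. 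This holds for any closed smoothly embedded minimal surface, with no orientability or two-sidedness hypothesis, since it only uses the first variation of area against radial vector fields.

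Next, since $\Sigma$ is smooth and embedded, the density at any point $x\in\Sigma$ equals one:
\begin{align*}
    \lim_{r\downarrow0}\frac{\operatorname{Area}(\Sigma\cap B_r(x))}{\pi r^2}=1.
\end{align*}
Because $\Sigma$ is nonempty, we may pick such a point $x$. Combining the density with the monotonicity gives
\begin{align*}
    \operatorname{Area}(\Sigma)
    \ge\operatorname{Area}(\Sigma\cap B_{r_0/2}(x))
    \ge e^{-\Lambda r_0/2}\pi\frac{r_0^2}{4}=:a_M.
\end{align*}
Connectedness is not even needed beyond nonemptiness. For a disconnected surface the same bound applies to each component, and this is the form used later to absorb the component count into the area term.

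The only step requiring care is justifying the Riemannian monotonicity with uniform constants. I would cite the standard version, either from Simon's GMT notes or via Nash-embedding $M$ isometrically into $\mathbb{R}^K$. In the embedding approach, $\Sigma$ becomes a surface with mean curvature bounded by $\sup|A_M|$, and the Euclidean monotonicity for bounded mean curvature, namely that $e^{Hr}\|V\|(B_r)/r^2$ is nondecreasing, gives the claim directly. Extrinsic balls in $\mathbb{R}^K$ contain no more of $\Sigma$ than needed, so the lower bound transfers to the area in $M$. This embedding route is the cleanest, because it avoids any discussion of normal coordinates.
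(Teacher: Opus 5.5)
Your argument is correct and essentially the same as the paper's proof. The paper also uses the Riemannian monotonicity formula with constants $r_M,\Lambda_M$ depending only on $(M,\bar g)$, density one at a point of $\Sigma$, and evaluation at radius $r_M/2$ to get $a_M=\pi e^{-\Lambda_M r_M/2}r_M^2/4$. Your Nash-embedding justification of the monotonicity step is also valid: the mean curvature of $\Sigma$ in $\mathbb{R}^K$ is $\operatorname{tr}_{T\Sigma}A_M$, so it is bounded by a constant multiple of $\sup|A_M|$ rather than by $\sup|A_M|$ itself, which only changes constants.
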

\begin{proof}
    Compactness of $M$ and the Riemannian monotonicity formula give
    constants $r_M>0$ and $\Lambda_M\ge0$, depending only on
    $(M,\bar g)$, such that for every closed minimal surface
    $\Sigma\subset M$ and every $x\in\Sigma$, the function
    \begin{align*}
        r\mapsto
        e^{\Lambda_M r}
        \frac{\operatorname{Area}(\Sigma\cap B_r^M(x))}{r^2}
    \end{align*}
    is nondecreasing on $(0,r_M)$. The formula applies to the area
    measure of a minimal surface without an orientability assumption.
    Since $\Sigma$ is smoothly embedded, its density at $x$ is one,
    and hence
    \begin{align*}
        \lim_{r\downarrow0}
        \frac{\operatorname{Area}(\Sigma\cap B_r^M(x))}{r^2}
        =\pi.
    \end{align*}
    Taking $r=r_M/2$ therefore gives
    \begin{align*}
        \operatorname{Area}(\Sigma)
        &\ge\operatorname{Area}(\Sigma\cap B_{r_M/2}^M(x))\\
        &\ge\pi e^{-\Lambda_M r_M/2}\frac{r_M^2}{4}.
    \end{align*}
    Thus we may take
    $a_M:=\pi e^{-\Lambda_M r_M/2}r_M^2/4>0$.
\end{proof}

\begin{proof}[Proof of Theorem~\ref{Theorem: Topology index and area}]
    Suppose first that $\Sigma$ is connected. Let
    $p:\widehat\Sigma\to\Sigma$, $q$, and $\widehat V$ be as in
    Proposition~\ref{Proposition: Localized shifted index comparison},
    and set
    \begin{align*}
        I:=\operatorname{Ind}(\Sigma),
        \qquad
        J:=\operatorname{Ind}(Q_{\widehat V-c_M}),
        \qquad
        \widehat\gamma:=\gamma(\widehat\Sigma).
    \end{align*}
    The composition of $p$ with the inclusion $\Sigma\hookrightarrow M$
    is a closed connected orientable minimal immersion. Its induced
    metric is $\widehat h=p^*h$, and its scalar Jacobi potential is
    $\widehat V=p^*V_\Sigma$. In particular,
    Corollary~\ref{Corollary: Shifted scalar Jacobi estimate} applies,
    irrespective of the normal bundle of this immersion, and gives
    \begin{align*}
        \widehat\gamma-1
        \le8J+\frac{14\kappa+4c_M}{8\pi}
        \operatorname{Area}(\widehat\Sigma).
    \end{align*}
    By \eqref{Equation: Localized shifted index comparison} and the
    degree of the covering,
    \begin{align*}
        J\le qmI,
        \qquad
        \operatorname{Area}(\widehat\Sigma)
        =q\operatorname{Area}(\Sigma).
    \end{align*}
    Consequently,
    \begin{align*}
        \widehat\gamma-1
        \le8qmI+\frac{14\kappa+4c_M}{8\pi}\,
        q\operatorname{Area}(\Sigma).
    \end{align*}
    Euler characteristic is multiplicative under finite coverings, so
    \begin{align*}
        2-2\widehat\gamma
        =\chi(\widehat\Sigma)=q\chi(\Sigma).
    \end{align*}
    Multiplying the preceding genus estimate by $2/q$, we obtain
    \begin{align}\label{Equation: Embedded Euler characteristic estimate}
        -\chi(\Sigma)
        \le16m\operatorname{Ind}(\Sigma)
        +\frac{14\kappa+4c_M}{4\pi}\operatorname{Area}(\Sigma).
    \end{align}

    For a connected closed surface, $b_0(\Sigma;\mathbb{Z}_2)=1$,
    and Poincar\'e duality over $\mathbb{Z}_2$ gives
    $b_2(\Sigma;\mathbb{Z}_2)=1$. Thus
    \begin{align*}
        b_1(\Sigma;\mathbb{Z}_2)=2-\chi(\Sigma).
    \end{align*}
    Using Lemma~\ref{Lemma: Uniform minimum area} to absorb the
    constant $2$, \eqref{Equation: Embedded Euler characteristic estimate}
    implies
    \begin{align}\label{Equation: Embedded Betti estimate explicit constants}
        b_1(\Sigma;\mathbb{Z}_2)
        \le16m\operatorname{Ind}(\Sigma)
        +\left(\frac{14\kappa+4c_M}{4\pi}+\frac2{a_M}\right)
        \operatorname{Area}(\Sigma).
    \end{align}
    In particular, the required inequality holds for connected surfaces
    with
    \begin{align}\label{Equation: One choice of ambient constant}
        C(M,\bar g):=
        \max\left\{16m,
        \frac{14\kappa+4c_M}{4\pi}+\frac2{a_M}\right\}.
    \end{align}
    This constant depends only on the fixed ambient manifold and the
    choices made at the beginning of the section.

    Finally, let $\Sigma$ be disconnected. Compactness implies that
    it has finitely many connected components
    $\Sigma_1,\ldots,\Sigma_k$. The normal Jacobi form is the direct
    sum of the forms on these components, so
    \begin{align*}
        \operatorname{Ind}(\Sigma)
        =\sum_{\ell=1}^k\operatorname{Ind}(\Sigma_\ell).
    \end{align*}
    Area and the first Betti number are also additive. Applying the
    connected case to each component therefore gives
    \begin{align*}
        b_1(\Sigma;\mathbb{Z}_2)
        &=\sum_{\ell=1}^k b_1(\Sigma_\ell;\mathbb{Z}_2)\\
        &\le C(M,\bar g)\sum_{\ell=1}^k
        \bigl(\operatorname{Ind}(\Sigma_\ell)
        +\operatorname{Area}(\Sigma_\ell)\bigr)\\
        &=C(M,\bar g)\bigl(\operatorname{Ind}(\Sigma)
        +\operatorname{Area}(\Sigma)\bigr),
    \end{align*}
    completing the proof.
\end{proof}
%
\bibliographystyle{amsalpha} 
\bibliography{main} 

\providecommand{\bysame}{\leavevmode\hbox to3em{\hrulefill}\thinspace}
\providecommand{\MR}{\relax\ifhmode\unskip\space\fi MR }
\providecommand{\MRhref}[2]{%
  \href{http://www.ams.org/mathscinet-getitem?mr=#1}{#2}
}
\providecommand{\href}[2]{#2}
\begin{thebibliography}{DLGR25}

\bibitem[ACS18]{ACS}
Lucas Ambrozio, Alessandro Carlotto, and Ben Sharp, \emph{Comparing the {Morse} index and the first {Betti} number of minimal hypersurfaces}, J. Differential Geom. \textbf{108} (2018), no.~3, 379--410.

\bibitem[AGL07]{AGL}
Claudio Arezzo, Alessandro Ghigi, and Andrea Loi, \emph{Stable bundles and the first eigenvalue of the {Laplacian}}, J. Geom. Anal. \textbf{17} (2007), no.~3, 375--386.

\bibitem[BG13]{BG13}
Leonardo Biliotti and Alessandro Ghigi, \emph{{Satake--Furstenberg} compactifications, the moment map and {$\lambda_1$}}, Amer. J. Math. \textbf{135} (2013), no.~1, 237--274.

\bibitem[BLY94]{BLY}
Jean-Pierre Bourguignon, Peter Li, and Shing-Tung Yau, \emph{Upper bound for the first eigenvalue of algebraic submanifolds}, Comment. Math. Helv. \textbf{69} (1994), 199--207.

\bibitem[BS18]{BS18}
Reto Buzano and Ben Sharp, \emph{Qualitative and quantitative estimates for minimal hypersurfaces with bounded index and area}, Trans. Amer. Math. Soc. \textbf{370} (2018), no.~6, 4373--4399.

\bibitem[CG26]{CG26}
Otis Chodosh and Matilde Gianocca, \emph{Minimal hypersurfaces of {Morse} index one}, 2026, Preprint, arXiv:2607.27444.

\bibitem[Chu25]{Chu25}
Adrian Chun-Pong Chu, \emph{Minimal surfaces with arbitrary genus in 3-spheres of positive {Ricci} curvature}, 2025.

\bibitem[CKM17]{CKM17}
Otis Chodosh, Daniel Ketover, and Davi Maximo, \emph{Minimal hypersurfaces with bounded index}, Invent. Math. \textbf{209} (2017), no.~3, 617--664.

\bibitem[CM16]{CM16}
Otis Chodosh and Davi Maximo, \emph{On the topology and index of minimal surfaces}, J. Differential Geom. \textbf{104} (2016), no.~3, 399--418.

\bibitem[CM20]{CMant20}
Otis Chodosh and Christos Mantoulidis, \emph{Minimal surfaces and the {Allen--Cahn} equation on $3$-manifolds: index, multiplicity, and curvature estimates}, Ann. of Math. (2) \textbf{191} (2020), no.~1, 213--328.

\bibitem[CM23]{CM23}
Otis Chodosh and Davi Maximo, \emph{On the topology and index of minimal surfaces {II}}, J. Differential Geom. \textbf{123} (2023), no.~3, 431--459.

\bibitem[CMF24]{CF24}
Santiago Cordero-Misteli and Giada Franz, \emph{Estimating the {Morse} index of free boundary minimal hypersurfaces through covering arguments}, J. Reine Angew. Math. \textbf{807} (2024), 187--220.

\bibitem[CS85]{CS85}
Hyeong~In Choi and Richard Schoen, \emph{The space of minimal embeddings of a surface into a three-dimensional manifold of positive {Ricci} curvature}, Invent. Math. \textbf{81} (1985), no.~3, 387--394.

\bibitem[CS25]{ChuStern25}
Adrian Chun-Pong Chu and Daniel Stern, \emph{Minimal surface doublings and electrostatics for {Schr{\"o}dinger} operators}, 2025, arXiv:2509.18630.

\bibitem[CW83]{CW83}
Hyeong~In Choi and Ai-Nung Wang, \emph{A first eigenvalue estimate for minimal hypersurfaces}, J. Differential Geom. \textbf{18} (1983), no.~3, 559--562.

\bibitem[DLGR25]{DGR25}
Francesca Da~Lio, Matilde Gianocca, and Tristan Rivi{\`e}re, \emph{{Morse} index stability for critical points to conformally invariant {Lagrangians}}, J. Eur. Math. Soc. (2025), Published online first.

\bibitem[EM08]{EM}
Norio Ejiri and Mario Micallef, \emph{Comparison between second variation of area and second variation of energy of a minimal surface}, Adv. Calc. Var. \textbf{1} (2008), no.~3, 223--239.

\bibitem[FCS80]{FCS80}
Doris Fischer-Colbrie and Richard Schoen, \emph{The structure of complete stable minimal surfaces in {$3$}-manifolds of non-negative scalar curvature}, Comm. Pure Appl. Math. \textbf{33} (1980), no.~2, 199--211.

\bibitem[GNS16]{GNS16}
Alexander Grigor'yan, Nikolai Nadirashvili, and Yannick Sire, \emph{A lower bound for the number of negative eigenvalues of {Schr{\"o}dinger} operators}, Journal of Differential Geometry \textbf{102} (2016), no.~3, 395--408.

\bibitem[Has11]{Has11}
Asma Hassannezhad, \emph{Conformal upper bounds for the eigenvalues of the {L}aplacian and {S}teklov problem}, J. Funct. Anal. \textbf{261} (2011), no.~12, 3419--3436.

\bibitem[Has13]{Hass13}
\bysame, \emph{Eigenvalues of perturbed {L}aplace operators on compact manifolds}, Pacific J. Math. \textbf{264} (2013), no.~2, 333--354.

\bibitem[Her70]{Hersch}
Joseph Hersch, \emph{Quatre propri{\'e}t{\'e}s isop{\'e}rim{\'e}triques de membranes sph{\'e}riques homog{\`e}nes}, C. R. Acad. Sci. Paris S{\'e}r. A-B \textbf{270} (1970), A1645--A1648.

\bibitem[IMN18]{IMN18}
Kei Irie, Fernando~C. Marques, and Andr{\'e} Neves, \emph{Density of minimal hypersurfaces for generic metrics}, Ann. of Math. (2) \textbf{187} (2018), no.~3, 963--972.

\bibitem[Kok20]{Kok20}
Gerasim Kokarev, \emph{Bounds for {L}aplace eigenvalues of {K}{\"a}hler metrics}, Adv. Math. \textbf{365} (2020), 107061.

\bibitem[Kor93]{Korevaar}
Nicholas Korevaar, \emph{Upper bounds for eigenvalues of conformal metrics}, J. Differential Geom. \textbf{37} (1993), no.~1, 73--93.

\bibitem[KV22]{KV22}
Mikhail Karpukhin and Denis Vinokurov, \emph{The first eigenvalue of the {Laplacian} on orientable surfaces}, Math. Z. \textbf{301} (2022), no.~3, 2733--2746.

\bibitem[LMN18]{LMN18}
Yevgeny Liokumovich, Fernando~C. Marques, and Andr{\'e} Neves, \emph{Weyl law for the volume spectrum}, Ann. of Math. (2) \textbf{187} (2018), no.~3, 933--961.

\bibitem[LY82]{LY}
Peter Li and Shing-Tung Yau, \emph{A new conformal invariant and its applications to the {W}illmore conjecture and the first eigenvalue of compact surfaces}, Invent. Math. \textbf{69} (1982), 269--291.

\bibitem[Mar14]{MarquesICM14}
Fernando~Cod{\'a} Marques, \emph{Minimal surfaces: variational theory and applications}, Proceedings of the International Congress of Mathematicians---Seoul 2014, vol.~I, Kyung Moon Sa, Seoul, 2014, pp.~283--310.

\bibitem[Max23]{Maximo23}
Davi Maximo, \emph{A note on minimal surfaces with bounded index}, Comm. Anal. Geom. \textbf{31} (2023), no.~5, 1069--1078.

\bibitem[MN17]{MN17}
Fernando~C. Marques and Andr{\'e} Neves, \emph{Existence of infinitely many minimal hypersurfaces in positive {Ricci} curvature}, Invent. Math. \textbf{209} (2017), no.~2, 577--616.

\bibitem[MN21]{MN21}
\bysame, \emph{Morse index of multiplicity one min-max minimal hypersurfaces}, Adv. Math. \textbf{378} (2021), 107527.

\bibitem[MPR19]{MPR19}
William~H. Meeks, III, Joaqu{\'i}n P{\'e}rez, and Antonio Ros, \emph{Bounds on the topology and index of minimal surfaces}, Acta Math. \textbf{223} (2019), no.~1, 113--149.

\bibitem[MRS26]{MRS26}
Davi Maximo, Philipp Reiser, and Daniele Semola, \emph{{Ricci} curvature and minimal hypersurfaces with large {Betti} numbers}, Calc. Var. Partial Differential Equations \textbf{65} (2026), Paper No. 120.

\bibitem[Nev14]{NevesICM14}
Andr{\'e} Neves, \emph{New applications of min-max theory}, Proceedings of the International Congress of Mathematicians---Seoul 2014, vol.~II, Kyung Moon Sa, Seoul, 2014, pp.~939--957.

\bibitem[Pit81]{Pitts81}
Jon~T. Pitts, \emph{Existence and regularity of minimal surfaces on {Riemannian} manifolds}, Mathematical Notes, vol.~27, Princeton University Press, Princeton, N.J., 1981.

\bibitem[Ros06]{Ros06}
Antonio Ros, \emph{One-sided complete stable minimal surfaces}, J. Differential Geom. \textbf{74} (2006), no.~1, 69--92.

\bibitem[Ros22]{RosGenusThree}
\bysame, \emph{On the first eigenvalue of the {Laplacian} on compact surfaces of genus three}, J. Math. Soc. Japan \textbf{74} (2022), no.~3, 813--828.

\bibitem[Ros23]{RosLargeGenus}
\bysame, \emph{First eigenvalue of the {Laplacian} on compact surfaces for large genera}, Math. Z. \textbf{305} (2023), no.~4, Paper No. 62.

\bibitem[Ros25]{Ros25}
\bysame, \emph{Index one minimal surfaces in positively curved {$3$}-manifolds}, J. Differential Geom. \textbf{129} (2025), no.~2, 523--540.

\bibitem[Sav10]{Savo10}
Alessandro Savo, \emph{Index bounds for minimal hypersurfaces of the sphere}, Indiana Univ. Math. J. \textbf{59} (2010), no.~3, 823--837.

\bibitem[Sha17]{Sharp17}
Ben Sharp, \emph{Compactness of minimal hypersurfaces with bounded index}, J. Differential Geom. \textbf{106} (2017), no.~2, 317--339.

\bibitem[Son23a]{SongYau23}
Antoine Song, \emph{Existence of infinitely many minimal hypersurfaces in closed manifolds}, Ann. of Math. (2) \textbf{197} (2023), no.~3, 859--895.

\bibitem[Son23b]{Song}
\bysame, \emph{{Morse} index, {Betti} numbers, and singular set of bounded area minimal hypersurfaces}, Duke Math. J. \textbf{172} (2023), no.~11, 2073--2147.

\bibitem[SS81]{SS81}
Richard Schoen and Leon Simon, \emph{Regularity of stable minimal hypersurfaces}, Comm. Pure Appl. Math. \textbf{34} (1981), no.~6, 741--797.

\bibitem[SY79]{SY79}
Richard Schoen and Shing-Tung Yau, \emph{Existence of incompressible minimal surfaces and the topology of three dimensional manifolds with non-negative scalar curvature}, Ann. of Math. (2) \textbf{110} (1979), no.~1, 127--142.

\bibitem[YY80]{YangYau}
Paul~C. Yang and Shing-Tung Yau, \emph{Eigenvalues of the {L}aplacian of compact {R}iemann surfaces and minimal submanifolds}, Ann. Scuola Norm. Sup. Pisa Cl. Sci. (4) \textbf{7} (1980), no.~1, 55--63.

\bibitem[Zho20]{Zhou20}
Xin Zhou, \emph{On the {Multiplicity One Conjecture} in min-max theory}, Ann. of Math. (2) \textbf{192} (2020), no.~3, 767--820.

\end{thebibliography}
\end{document}